\newtheorem{thm}{Theorem}[section]
\newtheorem{lem}[thm]{Lemma}
\newtheorem{prop}[thm]{Proposition}
\newtheorem{cor}[thm]{Corollary}
\newcommand{\thmref}[1]{Theorem~\ref{#1}}
\newcommand{\lemref}[1]{Lemma~\ref{#1}}
\newcommand{\propref}[1]{Proposition~\ref{#1}}
\newcommand{\corref}[1]{Corollary~\ref{#1}}
\theoremstyle{remark}
\newtheorem{rmk}{Remark}[section]
\newenvironment{acknowledgements}{\bigskip\textbf{Acknowledgements.}}{}
\renewcommand{\geq}{\geqslant}
\renewcommand{\leq}{\leqslant}
\begin{document}
\title[The quantitative distribution of Hecke eigenvalues]
{The quantitative distribution of Hecke eigenvalues of Maass forms}
\author[M. Kumari, J. Sengupta]{Moni Kumari and Jyoti Sengupta}

\address{Department of Mathematics, Bar-Ilan University, Ramat Gan 52900, Israel.}
\email{monimath1990@gmail.com}

\address{
	Department of Mathematics, Indian Association for the Cultivation Of Science
	2A and 2B Raja S C Mullick Road
	Kolkata 700032, India.}
\email{sengupta@math.tifr.res.in}

\date{\today}

\subjclass[2010]{Primary 11F41; Secondary 11F30.}

\keywords{Maass forms, Fourier coefficients, distributions}
\maketitle

\begin{abstract}
	Let $f$ be a normalized Hecke-Maass cusp form of weight zero for the group $SL_2(\mathbb Z)$.
	This article presents several quantitative results about the 
	distribution of Hecke eigenvalues of $f$. Applications to the $\Omega_{\pm}$-results for the Hecke eigenvalues of $f$ and its symmetric square sym$^2(f)$ are also given.
\end{abstract}

\section{Introduction and statement of results}
Let  $f$ 
 be a normalized primitive holomorphic cusp form of even weight $k\geq 12$ for the group $SL_2(\mathbb{Z})$. The generalized Ramanujan conjecture for such $f$ was proved by Deligne in 1974 which implies that the normalized Hecke eigenvalues $\lambda_f(p)$, with $p$ running through all the primes, lie in the interval $[-2,2]$.
  The asymptotic distribution of the Hecke eigenvalues $\lambda_f(p)$, as the prime $p$ varies, is an interesting and difficult problem.
 Inspired by the Sato-Tate conjecture for elliptic curves, Serre in the 1960s conjectured, settled recently \cite{lg}, that for any such $f$, $\lambda_f(p)$ for $p \le x$ distribute nicely: the sequence $\{\lambda_f(p)\}$ is equidistributed in $[-2,2]$ with respect to the Sato-Tate measure
 $\frac{1}{2\pi}\sqrt{4-t^2}dt.$
 More precisely, given any interval $I\subseteq [-2,2]$, 
 \begin{equation}\label{st}
 \lim_{x \rightarrow \infty}\frac{\# \{p\leq x: \lambda_f(p)\in {\rm I}\}}{\pi(x)}=\frac{1}{2\pi} \int_{I}\sqrt{4-t^2}dt,
 \end{equation}
where $\pi(x)$ denotes the number of primes less than or equal to $x$ for 
 any real number $x\ge 2$. 
 This is also called the Sato–Tate conjecture. It has significant implications in number theory. 
It is well-known that the Langlands’ functoriality conjectures imply the Sato-Tate conjecture for  automorphic representations on GL(2). In absence of a proof of Langlands’ functoriality it is an interesting question, “how much” Sato-Tate one can squeeze out of the handful of known cases of functoriality. It is clear and not very deep that these cases of functoriality imply some distributional results of Hecke eigenvalues, and it leads to a challenging optimization problem to push this to the limit. This type of question was probably initiated by Serre some 20 or 30 years ago.
Regarding the large values of $\lambda_f(p)$, 
Serre \cite[Appendix]{sha} in a letter to Shahidi has proved the following. 
\begin{thm}[{Serre}]\label{ser}
 For every $\epsilon >0$  and $c=2 \cos({2\pi }/{7})\simeq 1.247,$ there are infinitely many primes $p$ such that 
 $\lambda_f(p) >c-\epsilon$
 and infinitely many primes $q$ with $\lambda_f(q) <-(c-\epsilon).$
  Moreover, these infinite sets have positive upper densities.
 \end{thm}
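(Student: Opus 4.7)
The plan is to work in the trigonometric parametrization $\lambda_f(p) = 2\cos\theta_p$ with $\theta_p \in [0,\pi]$, and to recast the claim as an extremal problem for non-negative trigonometric polynomials whose Chebyshev expansions can be controlled by the known cases of symmetric-power functoriality.

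First, recall the Chebyshev identity $\lambda_{\mathrm{sym}^n f}(p) = U_n(\cos\theta_p) = \sin((n+1)\theta_p)/\sin\theta_p$. For each integer $n \geq 1$ for which $L(s,\mathrm{sym}^n f)$ is known to be automorphic and non-vanishing on $\Re(s)=1$ — via Gelbart--Jacquet, Kim--Shahidi, and the Rankin--Selberg products of these small symmetric powers — the Prime Number Theorem yields
$$\sum_{p \leq x} U_n(\cos\theta_p) = o(\pi(x)),$$
while the trivial case $n=0$ contributes $\pi(x)$. Consequently, for any real polynomial $P(t) = \sum_{n=0}^{N} a_n\, U_n(t/2)$ whose degree $N$ stays within the controllable range,
$$\frac{1}{\pi(x)} \sum_{p \leq x} P(\lambda_f(p)) \longrightarrow a_0 \quad \text{as } x \to \infty.$$

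Next, I would seek an extremal $P$ with the properties that $P \geq 0$ on $[-2,2]$, the support $\{t\in[-2,2]:P(t)>0\}$ lies in $(c-\epsilon,\,2]$, and $a_0>0$. From the two observations above,
$$\Bigl(\max_{[-2,2]} P\Bigr)\cdot \#\{p \leq x : \lambda_f(p) > c-\epsilon\} \;\geq\; \sum_{p\leq x} P(\lambda_f(p)) \;=\; a_0\,\pi(x) + o(\pi(x)),$$
so the upper density of such primes is at least $a_0/\max P>0$. The companion statement for $\lambda_f(q) < -(c-\epsilon)$ follows by replacing $P(t)$ with $P(-t)$: the identity $U_n(-x) = (-1)^n U_n(x)$ flips the signs of the odd-index Chebyshev coefficients but preserves the constant term $a_0$, so the same chain of inequalities applies to the primes in the lower tail.

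The crux — and the main obstacle — is the construction of such an extremal $P$ with threshold exactly $c = 2\cos(2\pi/7)$. This is a Chebyshev--Markov type extremal problem: one must balance the degree $N$ (dictated by the available symmetric-power functoriality and their Rankin--Selberg convolutions) against the requirement that $P$ vanish on $[-2, c-\epsilon]$ while preserving $a_0>0$. The specific value $2\cos(2\pi/7)$ emerges from a careful placement of the zeros of $P$ at points tied to the seventh roots of unity, and verifying that this is the optimal threshold attainable from the inputs available to Serre — rather than a weaker $c'<c$ — is the delicate combinatorial-analytic part of the argument.
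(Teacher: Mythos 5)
First, a point of comparison: the paper does not prove Theorem~\ref{ser} at all --- it is quoted from Serre's letter in the appendix of \cite{sha} --- so there is no internal proof to measure you against; your outline does, however, follow exactly the polynomial--optimization template that the authors use for their own results (e.g.\ Theorems~\ref{1to2} and~\ref{sr2r}), and it is the correct framework for Serre's argument: parametrize $\lambda_f(p)=2\cos\theta_p$, use $\sum_{p\le x}U_n(\cos\theta_p)=o(\pi(x))$ for the symmetric powers under control, and extract a density from an extremal polynomial with positive zeroth Chebyshev coefficient. Two things go wrong, though. The lesser one is that the conditions you impose on $P$ are mutually contradictory: a polynomial with $P\ge 0$ on $[-2,2]$ whose positivity set $\{t:P(t)>0\}$ lies in $(c-\epsilon,2]$ must vanish identically on $[-2,c-\epsilon]$ and hence be the zero polynomial. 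What your displayed chain of inequalities actually needs is only the one-sided condition $P\le 0$ on $[-2,c-\epsilon]$ together with $a_0>0$; then $a_0\pi(x)+o(\pi(x))=\sum_{p\le x}P(\lambda_f(p))\le \bigl(\max_{[-2,2]}P\bigr)\#\{p\le x:\lambda_f(p)>c-\epsilon\}$, with Deligne's bound keeping $\max P$ finite. The reflection $P(t)\mapsto P(-t)$ for the negative tail is fine.

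The serious gap is the one you concede yourself: you never produce the extremal polynomial, and that construction \emph{is} the theorem --- it is precisely what yields the constant $c=2\cos(2\pi/7)$ rather than some unspecified smaller threshold. Concretely, the input available to Serre is $\sum_{p\le x}U_m(\cos\theta_p)=o(\pi(x))$ for $1\le m\le 5$ (Shahidi's results on $L(s,\mathrm{sym}^m f)$ for $m\le 5$), so admissible polynomials have degree at most $5$, and the threshold attainable from degree-$n$ input is $2\cos\bigl(2\pi/(n+2)\bigr)$, whence $n=5$ gives $2\pi/7$. A polynomial that does the job is $P(x)=\bigl(x-2\cos\tfrac{2\pi}{7}\bigr)\bigl(x-2\cos\tfrac{4\pi}{7}\bigr)^2\bigl(x-2\cos\tfrac{6\pi}{7}\bigr)^2$: it is $\le 0$ on $[-2,2\cos\tfrac{2\pi}{7}]$ because the squared factors are nonnegative, and since $\bigl(x-2\cos\tfrac{2\pi}{7}\bigr)\bigl(x-2\cos\tfrac{4\pi}{7}\bigr)\bigl(x-2\cos\tfrac{6\pi}{7}\bigr)=x^3+x^2-2x-1=U_3(x/2)+U_2(x/2)$, while the remaining quadratic factor expands as $U_2(x/2)+\beta U_1(x/2)+\gamma U_0(x/2)$, the orthogonality rule that $U_aU_b$ contains $U_0$ only when $a=b$ gives $a_0=1>0$. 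Without exhibiting such a $P$ and verifying $a_0>0$, your write-up establishes only that \emph{some} threshold works; the ``delicate combinatorial--analytic part'' you defer is the entire mathematical content, and the optimality discussion at the end is asserted rather than proved.
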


In the same letter he was also interested to the following related questions:
\begin{itemize}
\item[a.]\label{a}
For a given $\epsilon >0$, does there exist a non-zero lower bound, depending only on $\epsilon$, for 
the upper density of the set of primes $p$ with $\lambda_f(p) >c-\epsilon$?
\item[b.]\label{b}
Is it possible to prove an analogous result like \thmref{ser} for any $c \in (0,2]$ without assuming the Deligne's bound for the Fourier coefficients?
 \item[c.]\label{c}
 For a given $\epsilon >0$, does there exist an explicit constant $N(\epsilon)$ such that there is a prime $p\leq N(\epsilon)$ with
 $\lambda_f(p) >c-\epsilon$?
\end{itemize}
 Regarding question (b), Serre remarked that without using Deligne's result one can show that 
for every $\epsilon >0$, there are infinitely many primes $p$ such that 
$|\lambda_f(p)|> c-\epsilon$, where $c$ is as in \thmref{ser}.
It would also be worth to point out that Chiriac and Jorza in \cite{cj} studied questions (a) and (b) for the constraint $|\lambda_{f}(p)|>1$ and Walji
in \cite{w18} studied question (b) for $\lambda_{f}(p)>0.904$.

In a similar spirit, it would also be interesting to look these questions for small values of $\lambda_f(p)$.
In this direction, using Deligne's result Serre \cite[Appendix]{sha} proved that for every $\epsilon >0$, 
there are infinitely many primes $p$ such that 
$|\lambda_f(p)|< \sqrt{{2}/{3}}+\epsilon.$ 
It makes sense to point out that the above Serre's results are obsolete for holomorphic cusp forms because now the Sato-Tate conjecture is known therefore we have a very good understanding of the distribution of Hecke eigenvalues.

In contrast with holomorphic forms, it is not known that Maass forms satisfy the Ramanujan-Petersson conjecture,
although one expects it to be true because the conjecture fits correctly in the general Langlands functoriality program. It is a major unsolved problem in the general theory today and its solution would have
significant consequences in number theory and related areas.
However, in the literature there are some quantitative results towards this conjecture.
Ramakrishnan \cite{ram} proved that for a Maass form $f$, the Ramanujan conjecture is true for primes with the lower
Dirichlet density (or, analytic density) at least ${9}/{10}$. This lower Dirichlet density
is later improved to ${34}/{35}$ by Kim and Shahidi \cite{ks}. Recently, Luo and Zhou \cite{lz} have 
refined the result of Kim and Shahidi from Dirichlet density to the natural density.

As in the case of holomorphic cusp form, the Sato-Tate conjecture is expected to hold for Maass cusp forms as well.
In the same letter to Shahidi, Serre asked the analogous questions as discussed above for the Hecke eigenvalues
of Maass forms. According to him, his method has an obstruction due to lack of knowledge of the Ramanujan conjecture
(or, at least, bounded Hecke eigenvalues) for Maass forms.

The main purpose of this article is to study distribution of Hecke eigenvalues of a weight zero Hecke-Maass cusp form for the group $SL_2(\mathbb Z)$ in various intervals. The main idea that we use to prove all our distribution results is to use analytic prime number theorems and known cases of functoriality of symmetric power $L$ functions of a Hecke-Maass cusp form $f$ which lead to a polynomial optimization problem.
Our first result gives an affirmative answer to the Serre's above questions when
$|\lambda_f(p)| <1$.
 \begin{thm}\label{l1ar}
Let $f$ be a Hecke-Maass cusp form of weight zero for the group $SL_2(\mathbb Z)$. 
Assume the Fourier coefficients $\lambda_f(n)$ of $f$ satisfy the Ramanujan conjecture. Then we have
\begin{equation}
 \liminf_{x\rightarrow \infty}\frac{\#\{p\leq x: ~|\lambda_f(p)|<1\}}{\pi(x)}\geq 0.00010077.
\end{equation}
 \end{thm}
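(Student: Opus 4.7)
The plan is to reduce the theorem to a polynomial extremal problem via symmetric power $L$-functions. Under Ramanujan we may write $\lambda_f(p)=2\cos\theta_p$, so Hecke multiplicativity gives $\lambda_{\mathrm{sym}^k f}(p)=U_k(\lambda_f(p)/2)$, where $U_k$ denotes the Chebyshev polynomial of the second kind. The Gelbart--Jacquet, Kim--Shahidi and Kim functorial lifts realise $\mathrm{sym}^k f$ as an automorphic representation of $\mathrm{GL}_{k+1}$ for $1\leq k\leq 4$; since $f$ is a level-one Maass form and in particular is not dihedral, the corresponding $L(s,\mathrm{sym}^k f)$ are entire and non-vanishing on $\mathrm{Re}(s)=1$. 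The analytic prime number theorem therefore yields
\begin{equation}
\sum_{p\leq x}\lambda_{\mathrm{sym}^k f}(p)=o(\pi(x))\qquad(1\leq k\leq 4).
\end{equation}

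Expanding any polynomial of degree at most four as $P(t)=\sum_{k=0}^{4}c_k U_k(t/2)$ in the Chebyshev-$U$ basis, linearity together with the prime number theorem for the trivial representation ($k=0$) gives $\pi(x)^{-1}\sum_{p\leq x}P(\lambda_f(p))=c_0+o(1)$. Since $\lambda_f(p)\in[-2,2]$ under Ramanujan, choosing $P$ to satisfy the pointwise majorisation $P(t)\leq\mathbf{1}_{|t|<1}(t)$ on $[-2,2]$ converts the prime-by-prime inequality into
\begin{equation}
\#\{p\leq x:|\lambda_f(p)|<1\}\;\geq\;\sum_{p\leq x}P(\lambda_f(p))\;=\;c_0\,\pi(x)+o(\pi(x)).
\end{equation}
Dividing by $\pi(x)$ and passing to the $\liminf$ reduces the theorem to exhibiting such a $P$ of degree at most four with $c_0\geq 0.00010077$.

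The remaining extremal problem is the main obstacle: maximise $c_0$ over all $P(t)=\sum_{k=0}^{4}c_k U_k(t/2)$ subject to $P\leq 1$ on $[-1,1]$ and $P\leq 0$ on $[-2,-1]\cup[1,2]$. I would attack it by invoking the Markov--Luk\'acs description of polynomials non-negative on an interval, which rewrites each one-sided constraint as a sum-of-squares identity with an interval weight and converts the optimisation into a finite-dimensional semidefinite program in the coefficients $c_k$. I expect the optimum to be attained by an equioscillating $P$ that touches the constraints at $\pm 1$, $\pm 2$ and at a few interior points of $(-1,1)$; guessing this contact set collapses the SDP to a square linear system that one solves in closed form for the $c_k$, after which admissibility is verified by a direct positivity check. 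The hardest part of the proof is this last step, since the target value $0.00010077$ is very small and hence leaves essentially no slack in the pointwise bound; the smallness itself reflects how poorly $\mathbf{1}_{|t|<1}$ can be minorised by a degree-four polynomial that must also be non-positive on $[1,2]\cup[-2,-1]$, so the genuine obstacle to the density bound is the polynomial approximation rather than the analytic input.
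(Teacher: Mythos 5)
Your outline is sound but it is a genuinely different argument from the paper's. The paper does not set up a linear majorisation problem at all: it takes the single degree-eight polynomial $g(t)=-\tfrac{1}{14}t^8+\tfrac{2}{9}t^6+\tfrac{17}{9}t^2-2$, chosen so that $\sum_{p\leq x}g(\lambda_f(p))=o(\pi(x))$ by the moment asymptotics $\sum_p\lambda_f(p)^{2j}\sim 1,2,5,14\cdot\pi(x)$ for $j=1,2,3,4$ (Brumley's theorem up to ${\rm sym}^8$), checks that $g\geq 0.039$ on $1\leq|t|\leq 2$ and $g^2\leq 15.093$ on $|t|\leq 2$, and then runs a shifted second-moment inequality on $V=(g+a)^2$: the upper bound $\sum_p V\leq(a^2+15.093)\pi(x)+o(\pi(x))$ comes from bounding $\sum g^2$ by $\|g\|_\infty^2\pi(x)$ (the sixteenth moment being unavailable), and optimising over the shift $a$ yields the constant $0.00010077$. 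Your approach — expand $P$ in the $U_k(t/2)$ basis, use the prime number theorem for ${\rm sym}^k f$, and majorise the indicator — is the more standard linear-programming route and is perfectly valid as a reduction; the Ramanujan hypothesis enters exactly where you put it, namely in confining $\lambda_f(p)$ to $[-2,2]$ so that the pointwise inequality $P\leq\mathbf{1}_{|t|<1}$ can be summed over all primes.

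Where your assessment goes wrong is in the final paragraph: the extremal problem is not the hard part, and there is enormous slack rather than ``essentially no slack.'' You do not need Markov--Luk\'acs, semidefinite programming, or an equioscillation analysis; the single even quartic
\begin{equation*}
P(t)=\frac{(t^2-1)(t^2-4)}{4}
\end{equation*}
already satisfies $0\leq P\leq 1$ on $|t|\leq 1$ and $P\leq 0$ on $1\leq|t|\leq 2$, and since $\sum_{p\leq x}\lambda_f(p)^2\sim\pi(x)$ and $\sum_{p\leq x}\lambda_f(p)^4\sim 2\pi(x)$ (Lemma~\ref{af}, needing only ${\rm sym}^k$ for $k\leq 4$), one gets $\sum_{p\leq x}P(\lambda_f(p))\sim\tfrac{1}{4}(2-5+4)\pi(x)=\tfrac{1}{4}\pi(x)$, hence a lower density of $1/4$ — four orders of magnitude better than the stated $0.00010077$ and much closer to the Sato--Tate prediction of about $0.609$. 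So your route, once the polynomial is written down, not only proves the theorem but substantially improves its constant; the smallness of $0.00010077$ reflects the paper's choice of the quadratic $(g+a)^2$ device (whose sup-norm bound on $\sum g^2$ is very lossy), not any intrinsic obstruction in the polynomial approximation. The one thing your write-up still owes is the explicit admissible $P$; without it the argument is a plan rather than a proof, but the gap is easily filled as above.
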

 
Unconditionally,  we prove the following.
 \begin{thm}\label{l1}
There are infinitely many primes $p$ such that $|\lambda_{f}(p)|<1$. 
\end{thm}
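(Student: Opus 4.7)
The plan is to argue by contradiction: assume that $|\lambda_f(p)| \geq 1$ for all but finitely many primes $p$, and exhibit a polynomial $P(x)$ with $P(x) \leq 0$ everywhere on $\{|x| \geq 1\}$ such that $\sum_{p \leq x} P(\lambda_f(p)) \log p \to +\infty$. These two facts are contradictory, because the hypothesis forces the sum to be bounded above by a constant. The candidate I would use is
$$
P(x) := (1-x^{2})(2-x^{2})^{2} = 4 - 8 x^{2} + 5 x^{4} - x^{6},
$$
whose non-positivity on $\{|x| \geq 1\}$ is immediate from the factorization. The factorized form is essential: without Ramanujan, $\lambda_{f}(p)$ is a priori unbounded, so $P$ must be genuinely non-positive on the \emph{entire} unbounded region, not merely on a neighborhood of $|x|=1$.

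Next I would rewrite $P(\lambda_{f}(p))$ in terms of Hecke eigenvalues of symmetric power $L$-functions. Using the Chebyshev-type identities $\lambda_{\mathrm{sym}^{2}f}(p) = \lambda_{f}(p)^{2} - 1$, $\lambda_{\mathrm{sym}^{4}f}(p) = \lambda_{f}(p)^{4} - 3\lambda_{f}(p)^{2} + 1$, and $\lambda_{\mathrm{sym}^{6}f}(p) = \lambda_{f}(p)^{6} - 5\lambda_{f}(p)^{4} + 6\lambda_{f}(p)^{2} - 1$, a short algebraic manipulation yields
$$
P(\lambda_{f}(p)) = 1 - 2\lambda_{\mathrm{sym}^{2}f}(p) - \lambda_{\mathrm{sym}^{6}f}(p).
$$
Summing against $\log p$ over $p \leq x$ now reduces the desired divergence to prime number theorems for each piece. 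The classical PNT gives $\sum_{p \leq x}\log p = x + o(x)$; and since $\mathrm{sym}^{2}f$ is a cuspidal automorphic representation of $GL_{3}$ by Gelbart--Jacquet, $L(s, \mathrm{sym}^{2}f)$ is entire and non-vanishing on $\mathrm{Re}(s)=1$ (Jacquet--Shalika), so Wiener--Ikehara gives $\sum_{p \leq x}\lambda_{\mathrm{sym}^{2}f}(p)\log p = o(x)$. The whole argument thus comes down to controlling the sum involving $\lambda_{\mathrm{sym}^{6}f}(p)$.

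The main obstacle is precisely this last estimate, since $\mathrm{sym}^{6}f$ is not known to be cuspidal. I would sidestep this via the Rankin--Selberg $L$-function $L(s, \mathrm{sym}^{3}f \times \mathrm{sym}^{3}f)$. Since $\mathrm{sym}^{3}f$ is cuspidal and self-dual on $GL_{4}$ by Kim--Shahidi, this $L$-function has a simple pole at $s=1$ (Jacquet--Shalika), which gives
$$
\sum_{p \leq x}\lambda_{\mathrm{sym}^{3}f \times \mathrm{sym}^{3}f}(p)\log p = x + o(x).
$$
The Clebsch--Gordan decomposition $\mathrm{sym}^{3}\otimes \mathrm{sym}^{3} \cong \mathrm{sym}^{6}\oplus \mathrm{sym}^{4}\oplus \mathrm{sym}^{2}\oplus \mathbf{1}$ translates at each unramified prime into
$$
\lambda_{\mathrm{sym}^{3}f \times \mathrm{sym}^{3}f}(p) = \lambda_{\mathrm{sym}^{6}f}(p) + \lambda_{\mathrm{sym}^{4}f}(p) + \lambda_{\mathrm{sym}^{2}f}(p) + 1.
$$
Subtracting the PNT contributions from $\mathrm{sym}^{2}f$ and $\mathrm{sym}^{4}f$ (the latter cuspidal on $GL_{5}$ by Kim) then isolates $\sum_{p \leq x}\lambda_{\mathrm{sym}^{6}f}(p)\log p = o(x)$. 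Combining everything, $\sum_{p \leq x} P(\lambda_{f}(p))\log p = x + o(x) \to +\infty$, contradicting the bounded-above sum produced by the hypothesis and completing the proof.
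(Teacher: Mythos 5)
Your proof is correct, and it reaches the same arithmetic inputs as the paper by a different mechanism. The paper first rewrites the statement as ``$\lambda_f(p^2)<0$ infinitely often,'' assumes $u_p:=\lambda_f(p^2)\geq 0$ on a dyadic interval $(x,2x]$, and applies Cauchy--Schwarz in the form $\bigl(\sum u_p^2\bigr)^2\leq\bigl(\sum u_p\bigr)\bigl(\sum u_p^3\bigr)$; since $\sum u_p^2\sim\sum u_p^3\sim x/\log x$ this forces $\sum u_p\gg x/\log x$, contradicting Brumley's estimate $\sum_{p\leq x}\lambda_f(p^2)=o(x/\log x)$. You instead exhibit the explicit sign-definite certificate $P(t)=(1-t^2)(2-t^2)^2$, which in the variable $u=t^2-1$ is $-u(u-1)^2$, and derive the positivity of its average directly from the cuspidality of $\mathrm{sym}^2 f$, $\mathrm{sym}^3 f$, $\mathrm{sym}^4 f$ together with the Clebsch--Gordan decomposition of $\mathrm{sym}^3\otimes\mathrm{sym}^3$ --- i.e.\ you reprove the $j=2,6$ cases of the moment asymptotics that the paper simply quotes from Brumley. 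The two contradictions are essentially dual to one another (your polynomial $-u(u-1)^2$ is exactly the nonnegativity certificate that makes the paper's Cauchy--Schwarz step work, and both use precisely the moments $\sum_p\lambda_f(p)^{2j}$ for $j\leq 3$), but your packaging has two small advantages: the factorized form $(1-t^2)(2-t^2)^2$ makes the non-positivity on the entire unbounded region $\{|t|\geq 1\}$ transparent without Ramanujan, and the argument is self-contained modulo standard functoriality rather than resting on a cited intermediate theorem. One point to make explicit if you write this up: the prime number theorems you invoke for $\mathrm{sym}^2 f$, $\mathrm{sym}^4 f$ and for $L(s,\mathrm{sym}^3 f\times\mathrm{sym}^3 f)$ require, besides holomorphy and non-vanishing on $\mathrm{Re}(s)=1$, a bound on the contribution of higher prime powers to $-L'/L$ (available from Luo--Rudnick--Sarnak/Kim--Sarnak bounds or Rankin--Selberg positivity), and the cuspidality of $\mathrm{sym}^3 f$ and $\mathrm{sym}^4 f$ uses that a level-one Maass form is not of polyhedral type; both are standard but worth a sentence.
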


Now we give an upper bound for the smallest prime $p$ such that $|\lambda_{f}(p)|<1$ which answers 
 question (c) of Serre in this situation.
\begin{thm}\label{1b}
Let $f$ be a Hecke-Maass cusp form of weight zero for the group $SL_2(\mathbb Z)$ with Laplacian eigenvalue $\frac{1}{4}+u^2$.
Then there exists a prime 
\begin{equation}\label{sb}
p\ll \exp \bigg(c \log^2(1+|u|)\bigg)
\end{equation}
such that $|\lambda_{f}(p)|<1$. Here $c$ is an absolute constant and the implied constant in the $\ll$ symbol in \eqref{sb} is also absolute
though ineffective.
\end{thm}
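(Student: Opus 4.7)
The plan is to argue by contradiction. Suppose that $|\lambda_f(p)|\ge 1$ for every prime $p\le X$, where $X=\exp(c\log^2(1+|u|))$ with $c$ a large absolute constant to be determined. The key idea is to test this hypothesis against the weighted prime sum
$$\Sigma(X):=\sum_{p\le X}P(\lambda_f(p))\log p,\qquad P(t):=(t^2-1)\bigl((\log X)^2-t^2\bigr),$$
chosen so that $P$ is non-negative precisely on $1\le|t|\le\log X$. Under the standing hypothesis, primes $p\le X$ with $|\lambda_f(p)|\le\log X$ contribute $\ge 0$ to $\Sigma(X)$, while for the ``bad'' primes with $|\lambda_f(p)|>\log X$ one uses the sixth-moment Rankin--Selberg bound $\sum_{p\le X}\lambda_f(p)^6\log p\ll X$ (which follows from the automorphy of $\mathrm{sym}^k f$ for $k\le 4$ due to Gelbart--Jacquet, Kim--Shahidi, and Kim, together with the $o(X)$ control of the prime sums of $\lambda_{\mathrm{sym}^{2j}f}$ for $j\le 3$ extracted via iterated Rankin--Selberg) combined with Chebyshev to get $\sum_{|\lambda_f(p)|>\log X}|\lambda_f(p)|^4\log p\ll X/(\log X)^2=o(X)$. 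Since $|P(t)|\le |t|^4$ for $|t|\ge\log X$, the bad-prime contribution is $\ge -o(X)$, so $\Sigma(X)\ge -o(X)$.

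On the other hand, expanding $P$ and using the Chebyshev identities $\lambda_f(p)^2=\lambda_{\mathrm{sym}^2 f}(p)+1$ and $\lambda_f(p)^4=\lambda_{\mathrm{sym}^4 f}(p)+3\lambda_{\mathrm{sym}^2 f}(p)+2$, a short calculation shows that the leading contributions from $\sum_{p\le X}\log p$, $\sum_{p\le X}\lambda_f(p)^2\log p$, and $\sum_{p\le X}\lambda_f(p)^4\log p$ cancel to leave
$$\Sigma(X)=-X+O\!\bigl((\log X)^2\,X\exp(-c_1\sqrt{\log X})(1+|u|)^{B}\bigr),$$
where the error comes from the effective prime number theorems for the cuspidal automorphic $L$-functions $L(s,\mathrm{sym}^2 f)$ on $GL_3$ and $L(s,\mathrm{sym}^4 f)$ on $GL_5$, both with analytic conductor polynomial in $1+|u|$; the implied constants are ineffective due to possible Landau--Siegel real zeros. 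Combining with $\Sigma(X)\ge -o(X)$ forces $c_1\sqrt{\log X}\le B\log(1+|u|)+O(\log\log X)$, which fails for $X=\exp(c\log^2(1+|u|))$ as soon as $c>(B/c_1)^2$ and $|u|$ is sufficiently large, giving the contradiction.

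The main obstacle is establishing this effective PNT with explicit conductor dependence for the symmetric-square and symmetric-fourth-power $L$-functions: the threshold $\exp(c\log^2(1+|u|))$ in~\eqref{sb} appears exactly as the value of $X$ at which the $\exp(-c_1\sqrt{\log X})$ savings overcomes the polynomial factor $(1+|u|)^B$ arising from the analytic conductor, while the ineffectivity of the implied constant in~\eqref{sb} reflects the possibility of an exceptional real zero of one of these $L$-functions.
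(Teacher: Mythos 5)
Your overall strategy --- playing a positivity constraint forced by the hypothesis ``no good prime up to $X$'' against an unconditional asymptotic whose error is governed by effective prime number theorems for symmetric power $L$-functions, with the threshold $X=\exp(c\log^2(1+|u|))$ read off from where $e^{-c_1\sqrt{\log X}}$ beats the conductor power --- is the same philosophy as the paper's, and your main-term computation for $P(t)=(t^2-1)((\log X)^2-t^2)$ is correct. But the implementation is substantially heavier than the paper's and contains a genuine gap. The paper works only with $\lambda_{\mathrm{sym}^2f}(p)=\lambda_f(p)^2-1$: under the hypothesis $\lambda_f(p^2)\geq 0$ for $p\leq y$, a Cauchy--Schwarz argument (using the fourth and sixth moments of $\lambda_f(p)$, which need only be known up to constants) gives $\sum_{p\leq y}\lambda_{\mathrm{sym}^2f}(p)\gg y/\log y$, and this is contradicted by the upper bound $\ll \frac{y}{\log y}(1+|u|)e^{-c\sqrt{\log y}/162}$ coming from the effective PNT for the \emph{single} $L$-function $L(s,\mathrm{sym}^2f)$ --- for which Hoffstein--Lockhart guarantee the absence of a Siegel zero.

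The gap: you need conductor-explicit PNT error terms for $L(s,\mathrm{sym}^4f)$ (for $E_4$) and, for the tail of the bad primes, an absolutely bounded sixth moment $\sum_{p\leq X}\lambda_f(p)^6\log p\ll X$, which drags in $\mathrm{sym}^6f$ via Rankin--Selberg. You assert that a possible exceptional real zero of these $L$-functions merely renders the implied constant ineffective. At your scale that is false: an exceptional zero $\beta=1-q^{-\delta}$ of $L(s,\mathrm{sym}^4f)$ contributes $\asymp X^{\beta}=Xe^{-(1-\beta)\log X}$ to $E_4$, and with $\log X\asymp\log^2(1+|u|)$ this is $X(1-o(1))$, nowhere near the required $o(X/(\log X)^2)$; Siegel's ineffective bound $1-\beta\gg_\epsilon q^{-\epsilon}$ gives nothing. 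So your contradiction collapses unless you actually \emph{exclude} Siegel zeros (or prove $1-\beta\gg 1/\log^{A}q$) for $L(s,\mathrm{sym}^4f)$ and for the degree-$15$ Rankin--Selberg $L$-functions controlling the sixth moment --- an input you neither supply nor cite, and one that is far less standard for Maass forms than the $\mathrm{sym}^2$ case. Relatedly, the ``$o(X)$ control of the prime sums of $\lambda_{\mathrm{sym}^{2j}f}$'' from Brumley and Kim--Shahidi is qualitative with $f$-dependent constants; at $X=\exp(c\log^2(1+|u|))$ you must make it uniform in $u$, which is precisely the hard part. The paper's leaner route needs effective, Siegel-zero-free information only for $\mathrm{sym}^2$, which is why it closes.
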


Next we obtain a more general result about the small values of $|\lambda_f(p)|$ without assuming the Ramanujan conjecture. 
More precisely, we prove the following.
\begin{thm}\label{sra}
	Let $f$ be a Hecke-Maass cusp form of weight zero for the group $SL_2(\mathbb Z)$ and $a>1$ be a real number.
	Then 
	\begin{equation}
	\liminf_{x\rightarrow \infty}\frac{\#\{ p\leq x: ~| \lambda_f(p)|<\sqrt{a}\}}{\pi(x)}\geq \frac{a-1}{a}.
	\end{equation}
\end{thm}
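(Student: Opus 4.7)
The plan is to translate the constraint $|\lambda_f(p)|<\sqrt{a}$ into a condition on the symmetric square Hecke eigenvalues and then exploit the prime number theorem for $L(s,\mathrm{sym}^2 f)$. The Hecke relations give $\lambda_f(p)^2=\lambda_f(p^2)+1=\lambda_{\mathrm{sym}^2 f}(p)+1$, so the event $|\lambda_f(p)|<\sqrt{a}$ is exactly $\lambda_{\mathrm{sym}^2 f}(p)<a-1$. Since $\mathrm{sym}^2 f$ is a cuspidal automorphic representation on $\mathrm{GL}_3$ by Gelbart--Jacquet, the $L$-function $L(s,\mathrm{sym}^2 f)$ is entire and non-vanishing on $\mathrm{Re}(s)=1$, and standard contour arguments yield
\begin{equation*}
\sum_{p\leq x}\lambda_{\mathrm{sym}^2 f}(p)=o(\pi(x))\quad\text{as }x\to\infty.
\end{equation*}
This is the only analytic input I need.

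Next, I would split the prime sum above according to the target inequality. Set
\begin{equation*}
N(x)=\#\{p\leq x:|\lambda_f(p)|<\sqrt{a}\},\qquad M(x)=\pi(x)-N(x).
\end{equation*}
For primes in $M(x)$ we have $\lambda_{\mathrm{sym}^2 f}(p)=\lambda_f(p)^2-1\geq a-1$, while for the remaining primes the trivial nonnegativity $\lambda_f(p)^2\geq 0$ gives the pointwise lower bound $\lambda_{\mathrm{sym}^2 f}(p)\geq -1$. Therefore
\begin{equation*}
o(\pi(x))=\sum_{p\leq x}\lambda_{\mathrm{sym}^2 f}(p)\geq (a-1)M(x)-N(x).
\end{equation*}

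Substituting $M(x)=\pi(x)-N(x)$ and rearranging produces $a\,N(x)\geq (a-1)\pi(x)+o(\pi(x))$, and dividing by $\pi(x)$ and taking the lower limit yields the claimed bound $(a-1)/a$. This argument is completely unconditional because it uses only the automorphy of $\mathrm{sym}^2 f$ and the elementary pointwise bound $\lambda_f(p)^2\geq 0$, never the Ramanujan conjecture. There is essentially no technical obstacle beyond invoking the PNT for $\mathrm{sym}^2 f$; the mild subtlety is just that the lower bound $\lambda_{\mathrm{sym}^2 f}(p)\geq -1$ on the small-eigenvalue set must be incurred (it is what produces the denominator $a$ rather than $a-1$), and one should note that as $a\to\infty$ this recovers density one, consistent with the fact that exceptional primes with very large $|\lambda_f(p)|$ must be sparse.
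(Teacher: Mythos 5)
Your proof is correct and is essentially the paper's own argument in a slightly different dress: the paper works with the polynomial $V_a(p)=\lambda_f^2(p)-a$ and the asymptotic $\sum_{p\leq x}\lambda_f^2(p)\sim\pi(x)$ (deduced from $\lambda_f^2(p)=\lambda_f(p^2)+1$ and Brumley's $\sum_{p\leq x}\lambda_f(p^2)=o(x/\log x)$), which is the same decomposition you perform after shifting by $1$ to phrase everything in terms of $\lambda_{\mathrm{sym}^2 f}(p)$. The analytic input (the prime number theorem for $L(s,\mathrm{sym}^2 f)$) and the two pointwise bounds ($\lambda_f^2(p)\geq a$ off the set, $\lambda_f^2(p)\geq 0$ on it) coincide exactly with those in the paper.
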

\begin{rmk}
	This result has many interesting consequences. For example,  when we take $a = 4$, then it immediately
	shows that the set of primes satisfying the Ramanujan conjecture has lower
	density $> 3/4.$ Now as mentioned earlier, Luo and Zhou in \cite{lz} has shown that this set of primes
	has density at least $34/35$. Certainly $34/35 > 3/4$ , but the point is Luo and Zhou's
	proof is long whereas our result which is nontrivial has been obtained by
	very elementary means and is comparatively very short. 
	\end{rmk}

Our next results are concerned with the large values of $|\lambda_f(p)|$.
\begin{thm}\label{1to2}
Let $f$ be a Hecke-Maass cusp form of weight zero for the group $SL_2(\mathbb Z)$. 
 Then
 \begin{itemize}
  \item [a.]
 ${\displaystyle{\liminf_{x\rightarrow \infty}\frac{\#\{p \leq x:~ 0.908< |\lambda_f(p)|< 1.928\}}{\pi(x)}\geq 0.0054...}}$
\end{itemize}
and 
\begin{itemize}
\item[b.]
 ${\displaystyle{\liminf_{x\rightarrow \infty}\frac{\#\{p \leq x: ~1< |\lambda_f(p)|< 2\}}{\pi(x)}\geq 0.164880.}}$
\end{itemize}
\end{thm}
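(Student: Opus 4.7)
The plan is to combine analytic prime number theorems for the known symmetric power $L$-functions of $f$ with a polynomial inequality that isolates the eigenvalue range $a < |\lambda_f(p)| < b$, exactly in the spirit the authors announce in the introduction. The input from automorphy is as follows: by Gelbart--Jacquet, Kim--Shahidi, and Kim, $\mathrm{sym}^k(f)$ is cuspidal automorphic for $1 \le k \le 4$, so the prime number theorem for each $L(s, \mathrm{sym}^k f)$ gives $\sum_{p \le x} \lambda_{\mathrm{sym}^k(f)}(p) = o(\pi(x))$. Expanding $\lambda_f(p)^n$ in terms of the $\lambda_{\mathrm{sym}^j(f)}(p)$ via the Clebsch--Gordan decomposition and extracting the constant term produces the corresponding Sato--Tate moment, so for every polynomial $P$ of degree at most $8$,
\begin{equation}\label{eq:stavg-plan}
\frac{1}{\pi(x)}\sum_{p \le x} P(\lambda_f(p)) \; \longrightarrow \;\int_{-2}^{2} P(t)\,\frac{\sqrt{4-t^2}}{2\pi}\,dt \;=:\; I_P.
\end{equation}

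Now fix $J := [-b,-a]\cup [a,b]$ with $(a,b) = (0.908, 1.928)$ for part~(a) and $(a,b) = (1,2)$ for part~(b). The strategy is to choose an even $P$ of degree at most $8$ satisfying $P \le 0$ on $[-a,a]\cup\{|t|\ge b\}$, $P \le M$ on $J$, and $I_P > 0$. Summing the pointwise inequality $P(\lambda_f(p)) \le M \cdot \mathbf{1}_{J}(\lambda_f(p))$ over $p \le x$ and invoking \eqref{eq:stavg-plan} gives
\[
M\cdot \#\{p \le x : \lambda_f(p) \in J\} \;\ge\; I_P\, \pi(x) + o(\pi(x)),
\]
whence $\liminf_x \#\{\cdots\}/\pi(x) \ge I_P/M$. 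Crucially, because we do not assume the Ramanujan bound, we require $P \le 0$ on the \emph{entire} unbounded set $\{|t|\ge b\}$, not merely on $[-2,-b]\cup[b,2]$; exceptional primes with $|\lambda_f(p)| > 2$ then contribute nonpositively and cause no damage.

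A natural ansatz is
\[
P(t) \;=\; -(t^2 - a^2)(t^2 - b^2)\,g(t^2),
\]
with $g$ a quadratic in $u = t^2$ chosen nonnegative on $[0,\infty)$ (e.g.\ $g(u)=(u-\alpha)^2+\beta$ with $\beta \ge 0$, or a product of two linear factors with nonpositive real roots). The factor $-(t^2-a^2)(t^2-b^2)$ is positive exactly on $J$ and negative off it, and multiplying by $g(t^2)\ge 0$ preserves this sign pattern while leaving $P$ of even degree with negative leading coefficient, so $P(t)\to -\infty$ as $|t|\to \infty$. One computes $I_P$ by integrating monomials against $d\mu_{ST}$ (equivalently, plugging in the Catalan numbers $C_0,\ldots,C_4$), and $M$ as the maximum of $P$ on $[a,b]$, a one-variable maximization.

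The main obstacle is the explicit optimization: maximize $I_P/M$ over the free parameters of $g$, and, in part~(a), simultaneously over $a$ and $b$. This is a nonlinear, nonconvex calculus problem whose solution pins down the specific numerical values $0.908$, $1.928$, $0.0054\ldots$ and $0.164880$ in the statement. That part~(a) uses the strictly smaller interval $(0.908, 1.928)\subsetneq(1,2)$ reflects a genuine trade-off between the width of the detection window and the size of $I_P/M$, with the optimum occurring at those endpoints; for part~(b) the endpoints are prescribed so only the two parameters of $g$ are optimized, which explains the substantially larger constant $0.164880$. Verifying that the optimal $g$ really is nonnegative on $[0,\infty)$ and that the resulting $P$ satisfies the three listed inequalities completes the argument.
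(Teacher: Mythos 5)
Your proposal is essentially the paper's proof: the authors pick an explicit even polynomial that is negative off the target window (including at infinity, precisely for the non-Ramanujan reason you flag), positive on it, compute its mean against the Catalan-number moments, and divide by its maximum on the window. For part (b) they use $-t^2(t^2-1)(t^2-4)$ (mean $1$, max $6.065$, giving $1/6.065=0.16488\ldots$), and for part (a) the degree-$6$ polynomial $-\tfrac{1}{5.7}t^6+\tfrac{4}{8.5}t^4+\tfrac{17}{18}t^2-1$ (roots $0.908$, $1.928$, mean $0.0085$, max $1.561$); both factor exactly as your ansatz $-(t^2-a^2)(t^2-b^2)g(t^2)$ with $g$ linear and positive on $[0,\infty)$. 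The one point to tighten in your write-up is the source of the moments: cuspidality of $\mathrm{sym}^k(f)$ for $k\le 4$ plus Clebsch--Gordan only yields moments of degree $\le 4$; to reach degree $6$ (needed here) or $8$ you also need $\sum_{p\le x}\lambda_{\mathrm{sym}^k f}(p)=o(\pi(x))$ for $5\le k\le 8$, which follows from the holomorphy and nonvanishing on $\operatorname{Re}(s)\ge 1$ of these higher symmetric power $L$-functions obtained by factoring Rankin--Selberg convolutions of $\mathrm{sym}^3$ and $\mathrm{sym}^4$ --- this is exactly the content of the Brumley result (Theorem~\ref{bru1}) the paper invokes.
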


\begin{rmk}
The density ``0.164880" obtained in Part (b) of \thmref{1to2} is close to its predicted density (which is 0.3910022) by \eqref {st}. Also this result
 refines  \cite[Theorem B]{cj} in two ways: first, we obtain a lower bound for the limit infimum whereas in \cite[Theorem B]{cj} they have obtained an upper Dirichlet density and second, our bound is much stronger than their bound. It is also worth pointing out that 
Walji in \cite{walji} has proved that the set of primes 
	$\{p: \lambda_{f}(p)>0.778-\epsilon\}$ has upper  Dirichlet  density  greater  than  $1/100$ for 
	any $\epsilon>0$.
	\end{rmk}
\begin{rmk}
If we apply the method of the proof of \thmref{1to2} to the polynomial $-x^2(x^2-a)(x^2-4)$ where $a=(1.189)^2$ then 
one can prove that the set of primes $p$ with $1.189< |\lambda_f(p)|< 2$ has lower density at least 0.0362. The 
reason to include this remark here is that there is a different proof of this fact given by 
Murty \cite[cor.1, p.527]{murty85} which he used to get an $\Omega$-result for the Fourier coefficients of $f$.
Murty's proof implicitly assumes the Ramanujan
conjecture for the Hecke eigenvalues of a Hecke-Maass cusp form and this
is not known, whereas this remark gives an unconditional proof.
	\end{rmk}

Next we prove a result about the phenomenon $|\lambda_f(p)|>\sqrt{2}$ which gives \corref{scs} that will be used later to get an  
$\Omega$-result about $\lambda_{sym^2 f}(n)$ (see proof of \corref{opm}).
\begin{thm}\label{sr2r}
Let $f$ be a Hecke-Maass cusp form of weight zero for the group $SL_2(\mathbb Z)$. 
 \begin{itemize}
	\item [a.]
Assume the Fourier coefficients $\lambda_f(n)$ of $f$ satisfy the Ramanujan conjecture. Then 
\begin{equation*}
 \liminf_{x\rightarrow \infty}\frac{\#\{p\leq x: ~| \lambda_f(p)|>\sqrt{2}\}}{\pi(x)}\geq \frac{1}{32}.
\end{equation*}
\item[b.]
Unconditionally, there are infinitely many primes $p$ such that  
$|\lambda_f(p)|> \sqrt{2}$.
\end{itemize}
 \end{thm}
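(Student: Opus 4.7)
The plan for part~(a) is to apply polynomial optimization with the choice
$P(x)=x^{8}-2x^{6}=x^{6}(x^{2}-2)$. Using the Clebsch--Gordan recurrence
$\lambda_{f}(p)\,\lambda_{sym^{k}f}(p)=\lambda_{sym^{k+1}f}(p)+\lambda_{sym^{k-1}f}(p)$, one obtains
$\lambda_{f}(p)^{6}=\lambda_{sym^{6}f}(p)+5\lambda_{sym^{4}f}(p)+9\lambda_{sym^{2}f}(p)+5$ and
$\lambda_{f}(p)^{8}=\lambda_{sym^{8}f}(p)+7\lambda_{sym^{6}f}(p)+20\lambda_{sym^{4}f}(p)+28\lambda_{sym^{2}f}(p)+14$.
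The automorphicity of $sym^{k}f$ for $k\leq 4$ (Gelbart--Jacquet, Kim--Shahidi, Kim) combined with the Rankin--Selberg convolutions $L(s,sym^{3}f\times sym^{3}f)$ and $L(s,sym^{4}f\times sym^{4}f)$ allows one to deduce $\sum_{p\leq x}\lambda_{sym^{k}f}(p)=o(\pi(x))$ for every $1\leq k\leq 8$ by bootstrapping the prime number theorem. Therefore $\sum_{p\leq x}P(\lambda_{f}(p))=(14-10)\pi(x)+o(\pi(x))=4\pi(x)+o(\pi(x))$. Now under Ramanujan we have $\lambda_{f}(p)\in[-2,2]$, and the factored form $P(\lambda_{f}(p))=\lambda_{f}(p)^{6}(\lambda_{f}(p)^{2}-2)$ is therefore non-positive on $\{|\lambda_{f}(p)|\leq\sqrt{2}\}$ and bounded above by $P(\pm 2)=128$ on the complementary set $\{\sqrt{2}<|\lambda_{f}(p)|\leq 2\}$. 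Splitting the prime sum accordingly yields $4\pi(x)+o(\pi(x))\leq 128\,\#\{p\leq x:\,|\lambda_{f}(p)|>\sqrt{2}\}$, and dividing by $\pi(x)$ gives the liminf bound $1/32$.

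For part~(b) the Ramanujan hypothesis is unavailable, so the plan is to work with the non-negative auxiliary polynomial
$Q(x)=\lambda_{sym^{2}f}(x)^{2}+\lambda_{sym^{3}f}(x)^{2}=(x^{2}-1)^{2}+x^{2}(x^{2}-2)^{2}$.
Both Rankin--Selberg $L$-functions $L(s,sym^{2}f\times sym^{2}f)$ and $L(s,sym^{3}f\times sym^{3}f)$ have a simple pole at $s=1$ coming from the trivial component of the Clebsch--Gordan decomposition, so that $\sum_{p\leq x}Q(\lambda_{f}(p))=2\pi(x)+o(\pi(x))$. Setting $u=\lambda_{f}(p)^{2}$, one has $Q=g(u)$ with $g(u)=u^{3}-3u^{2}+2u+1$, and a short calculus exercise shows that on $[0,2]$ the function $g$ attains its maximum at $u=1-1/\sqrt{3}$ with value $g(1-1/\sqrt{3})=1+\frac{2\sqrt{3}}{9}<2$.

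Suppose, for contradiction, that only finitely many primes $p$ satisfy $|\lambda_{f}(p)|>\sqrt{2}$. Then for all but $O(1)$ primes we have $\lambda_{f}(p)^{2}\leq 2$ and hence $Q(\lambda_{f}(p))\leq 1+\frac{2\sqrt{3}}{9}$, while the finite exceptional set contributes only a bounded error, so one would obtain $2\pi(x)+o(\pi(x))\leq \bigl(1+\frac{2\sqrt{3}}{9}\bigr)\pi(x)+O(1)$, impossible as $x\to\infty$ since $1+\frac{2\sqrt{3}}{9}<2$. The main technical work is in the underlying analytic input, namely non-vanishing on $\mathrm{Re}(s)=1$ and polynomial vertical growth for each of the $L$-functions invoked; these are available from the automorphicity of $sym^{k}f$ for $k\leq 4$ combined with Jacquet--Shalika, which is precisely what powers the bootstrap to $k\leq 8$ needed in part~(a) and the two Rankin--Selberg convolutions needed in part~(b).
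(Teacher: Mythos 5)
Your proposal is correct, and it is the same polynomial--moment method the paper uses, but with different polynomial choices in both parts, so a short comparison is worthwhile. In part (a) the paper takes the degree-six polynomial $\lambda_f(p)^4(\lambda_f(p)^2-2)$, whose mean is $(5-2\cdot 2)\pi(x)=\pi(x)$ and whose maximum on $[\sqrt{2},2]$ is $32$, giving $1/32$; you take the degree-eight polynomial $\lambda_f(p)^6(\lambda_f(p)^2-2)$, with mean $4\pi(x)$ and maximum $128$, which lands on exactly the same constant $1/32$ but at the cost of needing the eighth moment (i.e.\ the $\mathrm{sym}^8$ input via $L(s,\mathrm{sym}^4 f\times \mathrm{sym}^4 f)$), which the paper only invokes elsewhere; the paper's choice is the more economical one here, though your moment identities and the bound $P\leq P(2)=128$ on $[\sqrt{2},2]$ (valid since $P'>0$ there) are all correct. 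In part (b) the paper argues with $V=-\lambda_f^6+2\lambda_f^4+1$, which has mean $o(\pi(x))$ yet is $\geq 1$ on $\{|\lambda_f(p)|\leq\sqrt 2\}$, whereas you use the sum of squares $\lambda_{\mathrm{sym}^2 f}(p)^2+\lambda_{\mathrm{sym}^3 f}(p)^2=\lambda_f^6-3\lambda_f^4+2\lambda_f^2+1$, which has mean $2\pi(x)$ yet is at most $1+\tfrac{2\sqrt 3}{9}<2$ on that set; both are contradictions of the same kind, both need only the moments up to degree six already recorded in the paper's Lemma 3.2 (your Rankin--Selberg poles are just a repackaging of those asymptotics), and your calculus of $g(u)=(u-1)^3-(u-1)+1$ checks out. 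Two minor remarks: the paper's argument in (b) is run on dyadic intervals $(x,2x]$ and so gives the slightly stronger localized statement, while yours gives only infinitude (which is all the theorem asserts); and your sketch of the bootstrap $\sum_{p\leq x}\lambda_{\mathrm{sym}^k f}(p)=o(\pi(x))$ for $k\leq 8$ is exactly the content of the cited result of Brumley, so nothing is missing there.
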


Combining \thmref{l1} and part $(b)$ of \thmref{sr2r} with the identity
$\lambda_{sym^2 f}(p)=\lambda_{f}(p^2)=\lambda_f(p)^2-1$,
we obtain the following immediate corollary about the sign changes for the sequence
$\{\lambda_{sym^2 f}(p)\}$.
\begin{cor}\label{scs}
There are infinitely many primes $p$ such that $\lambda_{sym^2 f}(p)>0$ and also there are 
infinitely many primes $q$ such that $\lambda_{sym^2 f}(q)<0$.
\end{cor}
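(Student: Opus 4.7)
The statement is essentially a direct combination of the two inputs cited just before it, so the proposal is to organize the simple reduction cleanly rather than to introduce any new machinery.

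First, I would invoke the Hecke multiplicativity relation at a prime $p$, which yields $\lambda_f(p^2)=\lambda_f(p)^2-1$, and observe that for $SL_2(\mathbb{Z})$ the $p$-th Fourier coefficient of the symmetric square lift satisfies $\lambda_{sym^2 f}(p)=\lambda_f(p^2)$. Putting the two together gives the key identity $\lambda_{sym^2 f}(p)=\lambda_f(p)^2-1$, which turns a sign question for $\lambda_{sym^2 f}(p)$ into a size question for $|\lambda_f(p)|$ relative to the threshold $1$.

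The rest is automatic. For the negative sign: $\lambda_{sym^2 f}(p)<0$ holds precisely when $|\lambda_f(p)|<1$, and \thmref{l1} supplies infinitely many such primes unconditionally. For the positive sign: $\lambda_{sym^2 f}(p)>0$ holds precisely when $|\lambda_f(p)|>1$, and part (b) of \thmref{sr2r} already produces infinitely many primes with the stronger property $|\lambda_f(p)|>\sqrt{2}>1$, which is a fortiori enough.

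There is no real obstacle at this stage: all the analytic work lies in the proofs of \thmref{l1} and \thmref{sr2r}(b), both of which may be assumed here. The only point I would be careful to record explicitly is the derivation $\lambda_{sym^2 f}(p)=\lambda_f(p)^2-1$ from the Hecke relation at $p^2$, since without that identification the corollary is not immediate. Once that is in place, the proof is a one-line citation of each theorem.
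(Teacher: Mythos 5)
Your proposal is correct and coincides with the paper's own argument: the corollary is derived there exactly by combining Theorem~\ref{l1} and part (b) of Theorem~\ref{sr2r} with the identity $\lambda_{sym^2 f}(p)=\lambda_f(p^2)=\lambda_f(p)^2-1$. Nothing is missing.
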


\begin{rmk}
	It seems rather unlikely that one can prove that the set of primes figuring
	in \thmref{l1} has positive lower density by our strategy used here, namely
	employing suitable polynomials $P(x) \in {\mathbb R} [x] $ and using the
	asymptotics of the even power moments of $\lambda_f(p)$. We are presently
	working on this and we hope to report our progress in a future article.
	\end{rmk}

For the other part of the paper, 
as a consequence of the Deligne's result, we know 
$|\tau(n)|\leq d(n)n^{11/2},$
more generally, if $a_{f}(n)$ is the $n$th Fourier coefficient of a holomorphic normalized Hecke eigenform $f$ of weight $k$, then
$|a_f(n)|\leq d(n)n^{(k-1)/2},$
where $d(n)$ is the divisor function.
So, it is natural to ask, `` Are the above bounds optimal as a function of $n$?"
In other words, are these inequalities sharp? 
It is elementary to show that
$d(n)=O(n^{\epsilon})$
for any $\epsilon >0$ (\cite[p. 296]{apo}).
Therefore, we have 
$|\tau(n)|\leq n^{11/2+\epsilon}.$
Hardy showed that
$|\tau(n)|>n^{11/2},$
holds infinitely often.  The best result in this regard is due to Murty \cite{murty83}, 
who showed that there is an absolute and effective constant $c>0$ such that
\begin{equation}\label{ror}
|a_f(n)|>n^{(k-1)/2}\exp\left(\frac{c\log n}{\log \log n}\right)
\end{equation}
holds infinitely often. This is the best possible order because the maximal order of $d(n)$ is $\displaystyle{\exp\left(\frac{c'\log n}{\log \log n}\right)}$, due to Wigert \cite{wig}. In the literature, the phenomenon \eqref{ror} is referred to 
as an $\Omega$-result for the Fourier coefficient of a holomorphic cusp form.

Motivated from this, the final aim of this paper is to prove a 
$\Omega$-result for a general real sequence $\{a(n)\}$, that satisfy certain assumptions and as a consequence of that, we obtain the $\Omega$-results for the Hecke eigenvalues of a Hecke-Maass cusp form $f$ 
and its symmetric square sym$^2 (f)$.  

\begin{thm}\label{oor}
Let $\{a(n)\}$ be a real sequence such that
\begin{itemize}
  \item[(i)]
the function $n\mapsto a(n)$ is multiplicative,
\item[(ii)]
$\displaystyle{\sum_{p\leq x}a(p)^4\sim \alpha\pi(x)}$, for some constant $\alpha \geq 2$.
\end{itemize}
Then there exists a constant $c>0$ such that 
$$a(n)=\Omega\bigg(\displaystyle{{\rm {exp}}\bigg(\frac{c\log n}{\log \log n}\bigg)}\bigg).$$
\end{thm}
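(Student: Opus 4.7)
The plan is a Murty/Wigert-style argument combining hypothesis (ii) with multiplicativity. Denote $M(x)=\max_{p\leq x}|a(p)|$ and $S_\beta(x)=\#\{p\leq x:|a(p)|\geq\beta\}$.

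I first extract two preliminary consequences of (ii). Since $a(p)^4\leq\sum_{q\leq p}a(q)^4\sim\alpha\pi(p)$, I obtain the pointwise bound $|a(p)|\ll p^{1/4}$ valid for every prime $p$. Next, because $\alpha\geq 2$ gives $\alpha^{1/4}>1$, I fix some $\beta\in(1,\alpha^{1/4})$ and split the fourth moment at the threshold $\beta$, bounding the complementary sum by $\beta^4\pi(x)$:
\[
\sum_{\substack{p\leq x\\|a(p)|\geq\beta}}a(p)^4 \;\geq\;(\alpha-\beta^4)\pi(x)+o(\pi(x)),
\]
so a constant fraction of the fourth-moment mass is concentrated on primes with $|a(p)|\geq\beta$.

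I then split into two cases according to the growth rate of $M(x)$. In Case~1, $M(x)\geq x^\gamma$ for some fixed $\gamma>0$ and for $x$ in an unbounded set; taking $n$ to be a prime $p^*\leq x$ realizing the maximum yields $|a(n)|\geq x^\gamma\geq n^\gamma$ for infinitely many $n$, and $n^\gamma\geq\exp(c\log n/\log\log n)$ for any $c>0$ as soon as $\log\log n\geq c/\gamma$, so the $\Omega$-bound is immediate. In Case~2, $M(x)=x^{o(1)}$; fixing any small $\epsilon>0$, for $x$ large one has $a(p)^4\leq x^{4\epsilon}$ uniformly in $p\leq x$, and the displayed inequality then forces the polynomial density
\[
S_\beta(x)\;\geq\;\frac{(\alpha-\beta^4)\pi(x)}{x^{4\epsilon}}\;\geq\;x^{1-5\epsilon}.
\]
Enumerating the primes of $S_\beta$ in increasing order as $p_1<p_2<\cdots$ and setting $n_K=p_1\cdots p_K$, this density gives $p_K\leq K^{1/(1-5\epsilon)}$, whence $\log n_K\leq K\log p_K\ll K\log K$ and $\log\log n_K\sim\log K$. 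Multiplicativity then delivers
\[
|a(n_K)|\;\geq\;\beta^K\;\geq\;\exp\!\bigl((1-5\epsilon)(\log\beta)\,\tfrac{\log n_K}{\log\log n_K}\bigr),
\]
completing the $\Omega$-estimate with $c=(1-5\epsilon)\log\beta>0$.

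The main obstacle is that hypothesis (ii) alone supplies no \emph{a priori} uniform upper bound on the individual values $|a(p)|$: in principle the fourth-moment mass could be carried by a vanishingly small set of enormously large values, which would defeat any naive "positive density of good primes" strategy. The pointwise bound $|a(p)|\ll p^{1/4}$ extracted from (ii) itself, used in tandem with the Case~1/Case~2 dichotomy above, is precisely what sidesteps this obstruction: either one lone prime with polynomially large $|a(p)|$ suffices by itself (Case~1), or the values $|a(p)|$ are sub-polynomial and so the set $\{p:|a(p)|\geq\beta\}$ must be polynomially dense, which feeds into the Wigert-type primorial construction (Case~2). The rest of the argument is routine.
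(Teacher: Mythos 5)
Your proof is correct, and its engine is the same as the paper's: choose a threshold $\beta$ with $1<\beta^{4}<\alpha$ (the paper's $\delta$ with $\delta^{4}<2\leq\alpha$), use the fourth-moment hypothesis to show that the primes with $|a(p)|\geq\beta$ are plentiful, and exploit multiplicativity on their product to get $|a(N)|\geq\beta^{T}$ with $T\gg\log N/\log\log N$. The differences are only in the arrangement. The paper works inside a single dyadic block $(x,2x]$, takes $N$ to be the product of \emph{all} good primes in that block, and places its dichotomy at the target growth rate itself: either some $p\in(x,2x]$ already satisfies $|a(p)|\gg\exp(c_{0}\log p/\log\log p)$, or every $a(p)^{4}$ with $p\in(x,2x]$ is at most $\exp(4c_{0}\log x/\log\log x)$, so the number $T$ of good primes is at least $\frac{x}{\log x}\exp(-4c_{0}\log x/\log\log x)$, which still suffices after the bookkeeping $\log N\asymp T\log x$ and $\log x\asymp\log\log N$. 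You instead split on polynomial growth of $M(x)=\max_{p\leq x}|a(p)|$ and, in the sub-polynomial case, obtain the cleaner polynomial density $S_{\beta}(x)\geq x^{1-5\epsilon}$, which feeds a global primorial $n_{K}=p_{1}\cdots p_{K}$ rather than a dyadic product. Both routes are sound; your Case~2 is tidier, while your Case~1 needs the easy but currently implicit remark that the maximizing primes $p^{*}$ form an unbounded set (a fixed prime cannot satisfy $|a(p)|\geq x^{\gamma}$ for arbitrarily large $x$), so that infinitely many distinct $n=p^{*}$ are produced. The preliminary pointwise bound $|a(p)|\ll p^{1/4}$ is correct but is never actually used in either case, since the two cases already exhaust all possibilities.
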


\begin{cor}\label{ofs}
Let $f$ be a Hecke-Maass cusp form of weight zero for the 
group $SL_2(\mathbb Z)$. Further assume $\lambda_f(n)$ (resp. $\lambda_{sym^2f}(n)$) are the normalized Fourier coefficients 
of $f$ (resp. sym$^2 (f)$). If 
$a(n)$ denotes either $\lambda_f(n)$ or $\lambda_{sym^2f}(n)$, then 
\begin{equation*}
a(n)=\Omega\bigg({\rm {exp}}\bigg(\frac{c\log n}{\log \log n}\bigg)\bigg)
\end{equation*} 
for some $c>0$. 
\end{cor}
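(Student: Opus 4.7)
The plan is to verify the two hypotheses of \thmref{oor} for each of the sequences $\{\lambda_f(n)\}$ and $\{\lambda_{sym^2 f}(n)\}$ and then invoke \thmref{oor} directly. Multiplicativity in both cases is classical: for $\lambda_f$ it follows from the Hecke relations for weight zero Maass forms, and for $\lambda_{sym^2 f}$ it follows from the Gelbart--Jacquet theorem that $sym^2 f$ is a cuspidal automorphic representation of $GL_3$ over $\mathbb Q$, whose local Satake eigenvalues are multiplicative. The main task is therefore hypothesis (ii), namely the asymptotic $\sum_{p \leq x} a(p)^4 \sim \alpha\,\pi(x)$ with $\alpha \geq 2$.

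For $a = \lambda_f$, I would decompose $\lambda_f(p)^4$ into symmetric power eigenvalues via Clebsch--Gordan for $SU(2)$. The identities $\lambda_f(p)^2 = \lambda_{sym^2 f}(p) + 1$ and $\lambda_{sym^2 f}(p)^2 = \lambda_{sym^4 f}(p) + \lambda_{sym^2 f}(p) + 1$ combine to give
\begin{equation*}
\lambda_f(p)^4 \;=\; \lambda_{sym^4 f}(p) + 3\,\lambda_{sym^2 f}(p) + 2.
\end{equation*}
Since $sym^2 f$ (Gelbart--Jacquet) and $sym^4 f$ (Kim--Shahidi) are cuspidal automorphic, their standard $L$-functions are entire and non-vanishing on $\Re(s) = 1$, and the prime number theorem applied to each gives $\sum_{p \leq x} \lambda_{sym^k f}(p) = o(\pi(x))$ for $k \in \{2,4\}$. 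Hence $\sum_{p \leq x} \lambda_f(p)^4 \sim 2\,\pi(x)$, so hypothesis (ii) holds with $\alpha = 2$, and \thmref{oor} applies.

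For $a = \lambda_{sym^2 f}$, squaring the relation $\lambda_{sym^2 f}(p)^2 = \lambda_{sym^4 f}(p) + \lambda_{sym^2 f}(p) + 1$ yields
\begin{equation*}
\lambda_{sym^2 f}(p)^4 \;=\; \lambda_{sym^4 f}(p)^2 + \lambda_{sym^2 f}(p)^2 + 2\lambda_{sym^4 f}(p)\lambda_{sym^2 f}(p) + 2\lambda_{sym^4 f}(p) + 2\lambda_{sym^2 f}(p) + 1.
\end{equation*}
The required asymptotics for each term come from the Rankin--Selberg $L$-functions $L(s, sym^a f \times sym^b f)$ with $a, b \in \{2,4\}$, whose meromorphic continuation and functional equation are due to Jacquet--Piatetski-Shapiro--Shalika and whose non-vanishing on $\Re(s) = 1$ is due to Shahidi. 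These $L$-functions have a simple pole at $s=1$ if and only if $a = b$, so a standard Tauberian argument gives $\sum_{p \leq x} \lambda_{sym^k f}(p)^2 \sim \pi(x)$ for $k \in \{2, 4\}$ and $\sum_{p \leq x} \lambda_{sym^2 f}(p)\lambda_{sym^4 f}(p) = o(\pi(x))$. Combining these with the first-moment bounds above yields $\sum_{p \leq x} \lambda_{sym^2 f}(p)^4 \sim 3\,\pi(x)$, so hypothesis (ii) holds with $\alpha = 3$, and \thmref{oor} applies again.

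The main technical obstacle is arranging the correct analytic input on $\Re(s) = 1$ for the Rankin--Selberg $L$-functions in the second case. The crucial observation is that this is achievable \emph{without} invoking automorphy of $sym^6 f$ or $sym^8 f$, since only Rankin--Selberg convolutions of $sym^2 f$ and $sym^4 f$ are needed; these are convolutions of cuspidal automorphic representations of $GL_3$ and $GL_5$, where the full Jacquet--Piatetski-Shapiro--Shalika theory applies. Once these Tauberian asymptotics are in place, the remainder of the argument is bookkeeping and a direct appeal to \thmref{oor}.
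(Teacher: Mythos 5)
Your proposal is correct and reaches the same constants $\alpha=2$ and $\alpha=3$, but the verification of hypothesis (ii) for $\lambda_{sym^2 f}$ is organized differently from the paper's. The paper simply cites its Lemma~\ref{af} and Lemma~\ref{as4}: for $\lambda_f$ it expands $\lambda_f(p)^4=\lambda_f(p^4)+3\lambda_f(p^2)+2$ and applies Brumley's estimate $\sum_{p\leq x}\lambda_f(p^j)=o(x/\log x)$ for $j\leq 8$ (which is your computation in different notation, since $\lambda_f(p^{2j})=\lambda_{sym^{2j}f}(p)$); but for $\lambda_{sym^2 f}$ the paper first establishes the eighth moment $\sum_{p\leq x}\lambda_f(p)^8\sim 14\,x/\log x$ and then expands $\lambda_{sym^2 f}(p)^4=(\lambda_f(p)^2-1)^4$ into even moments of $\lambda_f(p)$, whereas you bypass the eighth moment entirely and expand $\bigl(\lambda_{sym^4 f}(p)+\lambda_{sym^2 f}(p)+1\bigr)^2$, handling each term by a Rankin--Selberg convolution of $sym^2 f$ and $sym^4 f$. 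The two routes rest on the same analytic input --- cuspidality of $sym^2 f$ and $sym^4 f$ together with Rankin--Selberg theory is exactly what underlies Brumley's theorem for $j\leq 8$ --- so neither is stronger, but yours makes explicit that nothing beyond convolutions of $GL_3$ and $GL_5$ cuspidal representations is needed, while the paper's version keeps all the analysis quarantined inside the single black box of Theorem~\ref{bru1}. One small point to tidy up: in the Tauberian step for $\sum_{p\leq x}\lambda_{sym^4 f}(p)^2\sim\pi(x)$ you should note that the prime-power terms of $-L'/L(s,sym^4 f\times sym^4 f)$ are controlled by the Kim--Sarnak bounds toward Ramanujan; this is handled in Brumley's paper, which is where the paper outsources it.
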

 
 We will now sharpen our $\Omega$-result and prove an $\Omega _{\pm}$-result
 for both the sequences $\lambda _f (n) $ and $\lambda_{sym ^2 f }(n)$.
 This result explicitly shows large oscillations of both these sequences.
 
\begin{cor}\label{opm}
	For any Hecke-Maass cusp form $f$ as in \corref{ofs} with $a(n)$ denoting either $\lambda_f(n)$ or $\lambda_{sym^2f}(n)$, then we have
	$$a(n)=\Omega_{\pm}\bigg({\rm {exp}}\bigg(\frac{d\log n}{\log \log n}\bigg)\bigg),$$
	for some $d>0$
	\end{cor}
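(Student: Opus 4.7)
The plan is to bootstrap the $\Omega$-bound of \corref{ofs} into an $\Omega_\pm$-bound by multiplicativity, essentially by ``flipping the sign'' through multiplication with a fixed auxiliary integer on which $a$ is negative.

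First, I would exhibit an integer $m$ with $a(m) < 0$. For $a = \lambda_{sym^2 f}$, \corref{scs} supplies a prime $q$ with $\lambda_{sym^2 f}(q) < 0$, so one may simply set $m = q$. For $a = \lambda_f$, \thmref{l1} produces a prime $q$ with $|\lambda_f(q)| < 1$, and then the Hecke relation $\lambda_f(q^2) = \lambda_f(q)^2 - 1$ forces $a(q^2) \in (-1, 0)$, so one sets $m = q^2$. In either case, $a(m)$ is a fixed nonzero real number of constant (but possibly small) modulus.

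Next, I would invoke \corref{ofs} to obtain an infinite sequence of integers $n_k$ with $|a(n_k)| \geq \exp(c \log n_k / \log \log n_k)$ for some $c > 0$. Inspecting the proof of \thmref{oor}, the $n_k$ are built as products of primes drawn from a positive-density subset of the primes, so by discarding from this subset the one or two primes that divide $m$ one may assume $\gcd(n_k, m) = 1$ for all $k$, losing at worst an $O(1)$ factor in $|a(n_k)|$ and thus nothing of asymptotic significance.

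Finally, multiplicativity gives $a(m n_k) = a(m)\, a(n_k)$, so the sign of $a(m n_k)$ is opposite to that of $a(n_k)$ while $|a(m n_k)| = |a(m)| \cdot |a(n_k)|$; combined with $\log(m n_k) = \log n_k + O(1)$, this yields $|a(m n_k)| \geq \exp(d \log(m n_k) / \log \log(m n_k))$ for some $d \in (0, c)$. Between the two sequences $\{n_k\}$ and $\{m n_k\}$, $a$ is therefore infinitely often $\geq \exp(d \log n / \log \log n)$ and infinitely often $\leq -\exp(d \log n / \log \log n)$, which is the desired $\Omega_\pm$-conclusion. The only delicate point is the coprimality reduction, but this is routine because the Erdős-style construction underlying \corref{ofs} is insensitive to the removal of finitely many primes from the set of building blocks; locating the sign-flipping integer $m$ for $\lambda_f$ is where \thmref{l1} plays its essential role, since one cannot simply invoke a prime with $\lambda_f(q) < 0$ unconditionally.
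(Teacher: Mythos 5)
Your proof is correct and follows essentially the same route as the paper: multiply the large values supplied by \corref{ofs} by a fixed coprime integer on which $a$ is negative, and use multiplicativity to flip the sign (the paper handles the non-coprime case by dividing out the prime from the square-free $m$ rather than by pruning the construction, but this is immaterial). The only quibble is your closing remark: the paper's \corref{scf} does unconditionally supply a prime $q$ with $\lambda_f(q)<0$, so your detour through $\lambda_f(q^2)=\lambda_f(q)^2-1<0$ via \thmref{l1}, while perfectly valid, is not actually necessary.
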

\begin{rmk}
 To the best of our knowledge \corref{ofs} and \corref{opm} 
are new in our context, i.e., for a Hecke-Maass cusp form.
\end{rmk}

\begin{rmk}
 Our method
of proof of the $\Omega$-result here has sym$^2(f)$ as a limiting case since we
don't know the asymptotics of the power moments of $\lambda_f(p)$ beyond the
eighth power. Hence we cannot prove the corresponding $\Omega$-result for
sym$^3(f)$ for instance.
\end{rmk}

 \subsection{Notations}
  For a real valued function $f$ and a positive function $g$, the symbol 
 ``$f(y) =O(g(y))$" or ``$f(y) \ll g(y)$"means
 that there is a constant $c>0$ such that $|f(y)|\leq cg(y)$ for any $y$ in the concerned domain.
 The dependence of this implied constant on some parameter(s) may sometimes
 be displayed by  a suffix (or suffixes).
Furthermore, the notation $f(x)\asymp g(x)$ means both the bounds $f(x)\ll
 g(x)$ and $g(x)\ll f(x)$ hold. 
 The notation ``$f(y)=o(g(y))$" means that $f(y)/g(y) \rightarrow 0$
 as $y\rightarrow \infty$ and ``$f(y) \sim g(y)$" means $f(y)-g(y) =o(g(y))$.
 The symbol 
 $f=\Omega(g)$
 means that $\limsup_{x\rightarrow \infty }\frac{|f(x)|}{|g(x)|}>0$. We write $f=\Omega_+(g)$ if
 $\limsup_{x\rightarrow \infty }\frac{f(x)}{g(x)}>0$ and $f=\Omega_-(g)$ if
 $\liminf_{x\rightarrow \infty }\frac{f(x)}{g(x)}<0$. Lastly, $f=\Omega_{\pm}(g)$ means  that
 $f=\Omega_+(g)$ and also $f=\Omega_-(g)$.
  Throughout the paper, the symbols $p$ and $q$ denote primes.
 
 \section{Preliminaries}
This  section  contains  a  very  brief  account  of  the  theory  of  Maass  forms  based  primarily  on \cite[section 1.9]{bump}.
Let $\mathbb H:=\{x+iy\in \mathbb C: y>0\}$ be the upper half-plane. Then the non-Euclidean Laplacian 
$$\Delta=-y^2\bigg(\frac{\partial^2}{\partial^2x}+\frac{\partial^2}{\partial^2y}\bigg)$$
acts on functions on $\mathbb H$. 
A Maass form $f$ of weight zero for the group $SL_2(\mathbb Z)$ is a smooth function on $\mathbb H$  such that
(i)$f(\gamma z)=f(z)$ for $\gamma \in SL_2(\mathbb Z)$; (ii) $f$ is an eigenfunction of $\Delta$; and 
(iii)$ f(x+iy)=O(y^N)$ as $y \rightarrow \infty$ for some $N$.

If $f$ is a Maass form with the Laplacian eigenvalue $\lambda$, then one 
writes $\lambda=s(1-s)$ with $s=1/2+iu$ where $s, u \in \mathbb C$, $u$
being known as the spectral parameter.
In other words, we have $\lambda=1/4+u^2$. 
A Maass form $f$ for $SL_2(\mathbb Z)$ with spectral parameter $u$ admits the following Fourier expansion
\begin{equation*}
f(x+iy)=\sqrt{y}\sum_{n \in \mathbb Z}a_f(n)K_{iu}(2\pi|n|y)e(nx).
\end{equation*}
Here $e(x)=e^{2\pi ix}$ and $K_{iu}$ is the Bessel function. Note that
$f$ is called a Maass cusp form if $a_0(f)=0$.

As in the case of holomorphic modular forms, there is a parallel Hecke theory for Maass forms. In particular, 
there is an orthonormal
basis of Maass cusp forms consisting of forms that are common eigenfunctions
of the Hecke operators $T_n$, $n\geq 1$. The elements in the basis are known as Hecke-Maass cusp forms.
If $f$ is a Hecke-Maass cusp form with $n$th Hecke eigenvalue $\lambda_f(n)$, then it is known that 
 $$a_f(n)=a_f(1)\lambda_f(n)~~~~~~~~~~~~~~ (n\geq 1).$$
 We call such an $f$ normalized if $a_f(1)=1$. So the Fourier coefficients of a 
 normalized Hecke-Maass cusp form are same as its Hecke eivenvalues. 
 The eigenvalues $\lambda_f(n)$ of $f$ are multiplicative
and satisfy
\begin{equation}\label{hr}
\lambda_f(n)\lambda_f(m)=\sum_{d|(m,n)}\lambda_f\left(\frac{mn}{d^2}\right).
\end{equation}
In particular, for any prime $p$, we have $\lambda_f(p^2)=\lambda_f(p)^2-1$.
The general Ramanujan conjecture asserts that for a normalized Hecke-Maass cusp form $f$ the $p$th Hecke eigenvalue $\lambda_f(p)$ satisfies
the bound $$|\lambda_f(p)|\leq 2.$$
Although this conjecture is wide open, we know from the works of Kim and Sarnak \cite{kim} that 
$$|\lambda_f(p)|\leq 2p^{\frac{7}{64}}.$$

Let $f$ be a normalized Hecke-Maass cusp form of weight zero for $SL_2(\mathbb Z)$ with $n$th Hecke eigenvalue $\lambda_f(n)$. Then the associated 
$L$-function is defined by 
\begin{equation*}
L(s,f)=\sum_{n=1}^{\infty}\frac{\lambda_f(n)}{n^{s}}=\prod_{p}\left(1-\frac{\alpha_f(p)}{p^{s}}\right)^{-1}
\left(1-\frac{\beta_f(p)}{p^{s}}\right)^{-1}
\end{equation*}
which converges absolutely for Re$(s)>1$. The local parameters $\alpha_f(p)$ and $\beta_f(p)$ are related to the 
normalized Fourier coefficients in the following way:
\begin{equation*}
\alpha_f(p)+\beta_f(p)=\lambda_f(p), ~~~~~~~~\alpha_f(p)\beta_f(p)=1.
\end{equation*}
In addition to $L(s,f)$, we have the symmetric power $L$-functions
\begin{equation*}
L(s, {\rm sym}^m f)=\prod_{p}\prod_{j=0}^{m}\left(1-\frac{\alpha_f(p)^j\beta_f(p)^{m-j}}{p^{s}}\right)^{-1},
\end{equation*}
for $m\geq 1$
and the Rankin-Selberg convolution of ${\rm sym}^m f$ and ${\rm sym}^n f$ 
\begin{equation*}
L(s, {\rm sym}^m f\times{\rm sym}^n f)=\prod_{p}\prod_{j=0}^{m}\prod_{i=0}^{n}\left(1-\frac{\alpha_f(p)^j\beta_f(p)^{m-j}\alpha_f(p)^i\beta_f(p)^{n-i}}{p^{s}}\right)^{-1}.
\end{equation*}
By the works of Kim \cite{kim} and Kim-Shahidi \cite{ks, ksh}
 the symmetric $m$th power $L$-functions for $m \leq 8$ are holomorphic and
non-vanishing for Re$(s)\geq 1$.

\section{Intermediate results}
We begin this section by recalling the following result of Brumley \cite[equ. 2.23, p 989]{bru} which is about the 
asymptotics of the even power moments of $\lambda_f(p)$. 

\begin{thm}\cite{bru}\label{bru1}
Let $f$ be a Hecke-Maass cusp form with Fourier coefficients $\lambda_f(n)$. Then 
for $1\leq j\leq 8$, we have
\begin{equation}
\sum_{p\leq x}\lambda_f(p^j)=o\bigg(\frac{x}{\log x}\bigg),
\end{equation}
as $x\rightarrow \infty.$
\end{thm}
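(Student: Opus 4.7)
My plan is to reduce the sum $\sum_{p\leq x}\lambda_f(p^j)$ to a prime number theorem for the symmetric $j$-th power $L$-function $L(s,{\rm sym}^j f)$. Iterating the Hecke relation \eqref{hr}, or equivalently using $\lambda_f(p)=\alpha_f(p)+\beta_f(p)$ together with $\alpha_f(p)\beta_f(p)=1$, one obtains
\begin{equation*}
\lambda_f(p^j) \;=\; \sum_{i=0}^{j}\alpha_f(p)^{i}\beta_f(p)^{j-i},
\end{equation*}
which is precisely the $p$-th Dirichlet coefficient of $L(s,{\rm sym}^j f)$, as is visible from its Euler product recalled in the preliminaries. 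Writing $\lambda_{{\rm sym}^j f}(p)$ for this quantity, the theorem reduces to the estimate
\begin{equation*}
\sum_{p\leq x}\lambda_{{\rm sym}^j f}(p) \;=\; o\!\left(\frac{x}{\log x}\right) \qquad (1\leq j\leq 8).
\end{equation*}

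Next I would invoke the prime number theorem for $L(s,{\rm sym}^j f)$. For $1\leq j\leq 8$ the works of Kim \cite{kim} and Kim--Shahidi \cite{ks,ksh}, combined with the Rankin--Selberg machinery that chains these low-rank cases together, imply that $L(s,{\rm sym}^j f)$ is holomorphic on ${\rm Re}(s)\geq 1$ and non-vanishing on the line ${\rm Re}(s)=1$. Coupled with polynomial growth in vertical strips and the usual Rankin--Selberg bound for the averaged size of the Dirichlet coefficients, a standard Hadamard--de la Vall\'ee Poussin type contour shift applied to $-L'/L(s,{\rm sym}^j f)$ yields
\begin{equation*}
\sum_{p\leq x}\lambda_{{\rm sym}^j f}(p)\log p \;=\; o(x),
\end{equation*}
and partial summation then delivers the desired bound.

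The main obstacle is not internal to the analytic argument: once holomorphy on ${\rm Re}(s)\geq 1$ and non-vanishing on ${\rm Re}(s)=1$ are available, the prime number theorem above is a textbook contour shift (the only point worth checking is that the error terms coming from the ramified factors at the archimedean place, which depend on the spectral parameter $u$, do not interfere with the $o$-estimate, and this is harmless as we fix $f$). The genuine difficulty — and the reason the statement is restricted to $j\leq 8$ — is the automorphic input that ${\rm sym}^j f$ behaves like a well-behaved automorphic $L$-function in this range. For $j\geq 9$ the relevant cases of Langlands functoriality are not available by the methods of Kim and Kim--Shahidi, so one cannot even rule out a pole of $L(s,{\rm sym}^j f)$ at $s=1$ by the present techniques, and the argument sketched above breaks down there.
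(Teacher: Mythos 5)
First, a point of comparison: the paper does not prove this statement at all --- it is imported verbatim from Brumley \cite[eq.~2.23, p.~989]{bru}, and the only internal support the paper offers is the remark in the preliminaries that $L(s,{\rm sym}^m f)$ is holomorphic and non-vanishing on ${\rm Re}(s)\geq 1$ for $m\leq 8$ by Kim and Kim--Shahidi. Your reduction of $\sum_{p\leq x}\lambda_f(p^j)$ to the $p$-th coefficients of $L(s,{\rm sym}^j f)$ is correct, and your identification of the automorphic input (functoriality for ${\rm sym}^3$, ${\rm sym}^4$ plus Rankin--Selberg chaining for $5\leq j\leq 8$) as the real reason for the restriction $j\leq 8$ is exactly right and consistent with how the paper frames the result.

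There is, however, a genuine gap in your analytic step, and you have pointed at the wrong place when you say the only thing to check is the archimedean factor. The delicate point is the contribution of the higher prime powers to $-\tfrac{L'}{L}(s,{\rm sym}^j f)$. The coefficient at $p^k$ is $\bigl(\sum_{i=0}^{j}\alpha_f(p)^{ik}\beta_f(p)^{(j-i)k}\bigr)\log p$, and since the Ramanujan conjecture is open for Maass forms the best pointwise bound is $|\alpha_f(p)|\leq p^{7/64}$, so this coefficient is only controlled by $(j+1)p^{7jk/64}\log p$. For $j\leq 4$ one has $7j/64<\tfrac12$ and the $k\geq 2$ tail converges absolutely in a neighbourhood of $s=1$, so the textbook contour shift does go through. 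But for $5\leq j\leq 8$ one has $7j/64>\tfrac12$ (for $j=8$ the exponent is $7/8$), the $k\geq2$ part of the Dirichlet series is not absolutely convergent anywhere near ${\rm Re}(s)=1$, and the ``standard Hadamard--de la Vall\'ee Poussin contour shift'' cannot be applied verbatim to $-L'/L(s,{\rm sym}^j f)$. This is precisely the obstruction formalized as Hypothesis H by Rudnick--Sarnak, and it is not automatic here because ${\rm sym}^j f$ for $j\geq 5$ is not known to be automorphic, so the Luo--Rudnick--Sarnak bounds for its Satake parameters are not available either. The way this is actually handled in the literature (Brumley, following Kim--Shahidi) is to avoid $-L'/L$ of the individual ${\rm sym}^j f$ and instead work with $\log L(s,{\rm sym}^a f\times{\rm sym}^b f)$ for $a,b\leq 4$: these are honest Rankin--Selberg $L$-functions of automorphic representations, their Dirichlet series have \emph{non-negative} coefficients at all prime powers, and a Wiener--Ikehara/Delange type Tauberian theorem combined with positivity (so that the uncontrolled $k\geq 2$ terms can be discarded with the correct sign) plus the factorization $L(s,{\rm sym}^a f\times{\rm sym}^b f)=\prod_i L(s,{\rm sym}^{a+b-2i}f)$ yields the estimates for the prime sums. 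Your sketch is therefore right in outline but missing the one idea (positivity of Rankin--Selberg coefficients) that makes the range $5\leq j\leq 8$ work.
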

As a consequence of this, we prove two consecutive results about the asymptotics 
of the even power moments of $\lambda_f(p)$. 

\begin{lem}\label{af}
For a Hecke-Maass cusp form $f$ of level one, we have
\begin{equation}\label{afs}
\sum_{p\leq x}\lambda_f^2(p)\sim \frac{x}{\log x}, ~~~~
\sum_{p\leq x}\lambda_f^4(p)\sim 2\frac{x}{\log x},~~~~
\sum_{p\leq x}\lambda_f^6(p)\sim 5\frac{x}{\log x}
\end{equation}
for all $x$ sufficiently large.
\end{lem}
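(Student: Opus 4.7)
The plan is to reduce each even power moment to a linear combination of the sums $\sum_{p\le x}\lambda_f(p^j)$ (plus a contribution from $\pi(x)$) by repeatedly invoking the Hecke multiplicativity relation \eqref{hr}, and then to apply \thmref{bru1} together with the prime number theorem.

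First I would use the identity $\lambda_f(p)^2=\lambda_f(p^2)+1$ (which is the $m=n=p$ case of \eqref{hr}) to handle the second moment directly: summing over $p\le x$ and applying \thmref{bru1} with $j=2$ gives
\begin{equation*}
\sum_{p\le x}\lambda_f^2(p)=\sum_{p\le x}\lambda_f(p^2)+\pi(x)=\pi(x)+o(\pi(x))\sim\frac{x}{\log x}.
\end{equation*}
Next, for the fourth moment I would square the previous identity and use \eqref{hr} in the form $\lambda_f(p^2)^2=\lambda_f(p^4)+\lambda_f(p^2)+1$ to write
\begin{equation*}
\lambda_f(p)^4=\lambda_f(p^4)+3\lambda_f(p^2)+2.
\end{equation*}
Summing over $p\le x$ and applying \thmref{bru1} to both $j=2$ and $j=4$ immediately yields the asymptotic $2x/\log x$.

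For the sixth moment the routine is the same but one more iteration is required: combining $\lambda_f(p)^6=(\lambda_f(p^2)+1)^3$ with the product formula $\lambda_f(p^2)\lambda_f(p^4)=\lambda_f(p^6)+\lambda_f(p^4)+\lambda_f(p^2)$ (again a direct application of \eqref{hr}) leads to
\begin{equation*}
\lambda_f(p)^6=\lambda_f(p^6)+5\lambda_f(p^4)+9\lambda_f(p^2)+5,
\end{equation*}
and the desired asymptotic $5x/\log x$ then follows by \thmref{bru1} applied to $j\in\{2,4,6\}$ and the prime number theorem. The constants $1$, $2$, $5$ are exactly the Catalan numbers $C_1$, $C_2$, $C_3$, i.e.\ the even moments of the Sato--Tate measure, which serves as a useful consistency check.

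There is no real obstacle here; the only mild bookkeeping task is the polynomial identity expressing $\lambda_f(p)^{2k}$ as a linear combination of $\lambda_f(p^{2j})$ for $0\le j\le k$, and this is done via two or three applications of \eqref{hr}. Everything else is a direct quotation of \thmref{bru1} and the prime number theorem $\pi(x)\sim x/\log x$; in particular, since $1\le j\le 8$ covers all the values of $j$ that appear, no further input on symmetric power $L$-functions is needed for this lemma.
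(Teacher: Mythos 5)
Your proposal is correct and is essentially identical to the paper's own proof: both derive the polynomial identities $\lambda_f^2(p)=\lambda_f(p^2)+1$, $\lambda_f^4(p)=\lambda_f(p^4)+3\lambda_f(p^2)+2$, and $\lambda_f^6(p)=\lambda_f(p^6)+5\lambda_f(p^4)+9\lambda_f(p^2)+5$ from the Hecke relation \eqref{hr} and then conclude via \thmref{bru1} for $j=2,4,6$ together with the prime number theorem. The Catalan-number consistency check is a nice touch but not part of the paper's argument.
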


\begin{proof}
From relation \eqref{hr}, we have
\begin{align*}
\lambda_f^2(p)&=\lambda_f(p^2)+1,\\
\lambda_f^4(p)&= \lambda_f(p^4)+3\lambda_{f}(p^2)+2,\\
\lambda_f^6(p)&= \lambda_f(p^6)+5\lambda_f(p^4)+9\lambda_{f}(p^2)+5,
\end{align*}
for a prime $p$.
The required results follow once we use the prime number theorem and \thmref{bru1} for $j=2,4,6$.
\end{proof}

\begin{lem}\label{as4}
For all sufficiently large $x$, we have
\begin{equation}\label{af1}
\sum_{p\leq x}\lambda_f^8(p)\sim \frac{14 x}{\log x},
\end{equation}
and consequently
\begin{equation*}
\sum_{p\leq x}\lambda_{sym^2 f}^4(p)\sim \frac{3x}{\log x}.
\end{equation*}
\end{lem}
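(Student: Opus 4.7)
The plan is to mimic exactly the strategy of Lemma \ref{af}: use the Hecke relation \eqref{hr} repeatedly to write $\lambda_f(p)^8$ as a linear combination of the Hecke eigenvalues $\lambda_f(p^{2j})$ (with $0 \le j \le 4$), and then apply Theorem \ref{bru1} (Brumley) to kill the nonconstant terms while the constant term, multiplied by $\pi(x)$, provides the main term.

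Concretely, I would first iterate the recursion $\lambda_f(p)\lambda_f(p^n) = \lambda_f(p^{n+1}) + \lambda_f(p^{n-1})$ that \eqref{hr} specializes to at prime powers. A routine Chebyshev-type computation (equivalent to the identity $\binom{n}{k} - \binom{n}{k-1}$ giving the Catalan-triangle coefficients) yields
\begin{equation*}
\lambda_f(p)^8 \;=\; \lambda_f(p^8) + 7\lambda_f(p^6) + 20\lambda_f(p^4) + 28\lambda_f(p^2) + 14.
\end{equation*}
Summing over $p \le x$, the prime number theorem handles the constant contribution $14\,\pi(x) \sim 14x/\log x$, while Theorem \ref{bru1} (applied with $j=2,4,6,8$) bounds each of the four remaining sums by $o(x/\log x)$. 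This gives the first claim \eqref{af1}.

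For the second assertion, I would use the identity $\lambda_{sym^2 f}(p) = \lambda_f(p^2) = \lambda_f(p)^2 - 1$ noted after \eqref{hr}, and expand by the binomial theorem:
\begin{equation*}
\lambda_{sym^2 f}(p)^4 \;=\; \bigl(\lambda_f(p)^2-1\bigr)^4 \;=\; \lambda_f(p)^8 - 4\lambda_f(p)^6 + 6\lambda_f(p)^4 - 4\lambda_f(p)^2 + 1.
\end{equation*}
Summing over $p \le x$ and inserting the asymptotics $\sum_{p\le x}\lambda_f(p)^{2k} \sim c_k\, x/\log x$ for $k=1,2,3$ from Lemma \ref{af} together with \eqref{af1} and the prime number theorem, the coefficients combine to $14 - 4\cdot 5 + 6\cdot 2 - 4\cdot 1 + 1 = 3$, producing the stated $3x/\log x$.

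The only non-routine step is the combinatorial identity for $\lambda_f(p)^8$; once that expansion is pinned down the rest is simply collecting main terms and invoking the already available results. Since the power $8$ is precisely the highest one for which Theorem \ref{bru1} applies, this lemma is at the boundary of what the current method can reach, which is consistent with the remark in the introduction that sym$^2(f)$ is the limiting case for the $\Omega$-results derived later.
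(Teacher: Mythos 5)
Your proposal is correct and follows essentially the same strategy as the paper: linearize $\lambda_f(p)^8$ via the Hecke relations, apply Theorem~\ref{bru1} to the $\lambda_f(p^{2j})$ terms and the prime number theorem to the constant, then expand $(\lambda_f(p)^2-1)^4$ for the second claim. The only (cosmetic) difference is that you write $\lambda_f(p)^8$ directly in the Chebyshev/Catalan-triangle basis $\lambda_f(p^8)+7\lambda_f(p^6)+20\lambda_f(p^4)+28\lambda_f(p^2)+14$ (which checks out), whereas the paper reaches the same constant $14$ by computing $\lambda_f(p^6)\lambda_f(p^2)$ in two ways and reusing the lower moments from Lemma~\ref{af}.
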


\begin{proof}
From the proof of \lemref{af}, we write $\lambda_f(p^2), \lambda_f(p^4)$ and $\lambda_f(p^6)$
as a polynomial in $\lambda_f(p)$ and hence we have 
\begin{equation*}
\lambda_f(p^6)\lambda_f(p^2)=\lambda_f^8(p)-6\lambda_f^6(p)+11\lambda_f^4(p)-7\lambda_f^2(p)+1.
\end{equation*}
On the other hand, equation \eqref{hr} gives 
\begin{equation*}
\lambda_f(p^6)\lambda_f(p^2)=\lambda_f(p^8)+\lambda_f(p^6)+\lambda_f(p^4).
\end{equation*}
Combining last two equations 
\begin{equation*}
\lambda_f^8(p)=\lambda_f(p^8)+\lambda_f(p^6)+\lambda_f(p^4)+6\lambda_f^6(p)-11\lambda_f^4(p)+7\lambda_f^2(p)-1.
\end{equation*}
From \thmref{bru1} and \lemref{af},  for sufficiently large $x$, we obtain
\begin{equation*}
\sum_{p\leq x}\lambda_f^8(p)\sim \frac{14 x}{\log x},
\end{equation*}
and this proves the first part of the lemma.

For the next claim, we know
\begin{equation*}
\lambda_{sym^2 f}^4(p)=\lambda_{ f}^4(p^2)=\big(\lambda_f^2(p)-1\big)^4=
\lambda_f^8(p)-4\lambda_f^6(p)+6\lambda_f^4(p)-4\lambda_f^2(p)
+1.
\end{equation*}
Now for a large $x$, we take sum on the both sides over all the primes $p\leq x$ in the last identity and then use \lemref{af} and \eqref{af1} for the asymptotics of the involved sums to obtain the required result.
\end{proof}

Next for a Hecke-Maass cusp form $f$, we also require a lower bound for the sum $\sum_{1\leq p\leq x}|\lambda_f(p)|$. 
Using \lemref{af} for the asymptotics for the even power moments of $\lambda_f(p)$, we prove the following.
\begin{prop}\label{fg1}
For a Hecke-Maass cusp form $f$ of level one we have the bound
\begin{equation}
\sum_{1\leq p\leq x}|\lambda_f(p)|\gg_f \frac{x}{\log x},
\end{equation}
for all $x$ sufficiently large. 
\end{prop}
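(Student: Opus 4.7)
The plan is to obtain the lower bound via Cauchy--Schwarz, feeding in the asymptotics of the second and fourth moments of $\lambda_f(p)$ already established in \lemref{af}. Crucially, only averaged quantities will be used, so no pointwise Ramanujan-type bound on $|\lambda_f(p)|$ is required; this is important because no such bound is known unconditionally for Maass forms.

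First I would write $\lambda_f(p)^2 = |\lambda_f(p)|^{1/2}\cdot |\lambda_f(p)|^{3/2}$ and apply the Cauchy--Schwarz inequality to obtain
$$\left(\sum_{p \leq x} \lambda_f(p)^2\right)^{\!2} \;\leq\; \left(\sum_{p \leq x} |\lambda_f(p)|\right)\!\left(\sum_{p \leq x} |\lambda_f(p)|^3\right).$$
By \lemref{af} the left-hand side is asymptotic to $(x/\log x)^2$, so the problem is reduced to showing that the cubic absolute moment on the right is $O(x/\log x)$.

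For this I would invoke the elementary AM--GM inequality $|t|^3 \leq \frac{1}{2}(t^2+t^4)$, which trades the unwanted odd moment for the known even ones:
$$\sum_{p \leq x} |\lambda_f(p)|^3 \;\leq\; \frac{1}{2}\sum_{p \leq x}\lambda_f(p)^2 + \frac{1}{2}\sum_{p \leq x}\lambda_f(p)^4 \;\sim\; \frac{3x}{2\log x},$$
again by \lemref{af}. Substituting back into the Cauchy--Schwarz bound and rearranging yields $\sum_{p \leq x}|\lambda_f(p)| \geq \bigl(\tfrac{2}{3}+o(1)\bigr)\,x/\log x$, which is the required estimate (with an absolute implied constant, in fact stronger than the $\gg_f$ asserted).

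There is no serious obstacle here: the heavy analytic content (Brumley's estimate behind \lemref{af}) has already been absorbed, and what remains are two very elementary inequalities. The one subtle point worth flagging is that one should resist the temptation to estimate $|\lambda_f(p)|^3$ via a pointwise bound on $|\lambda_f(p)|$, because no sharp such bound is available without the Ramanujan conjecture; letting the AM--GM step convert the cubic moment into averaged even-moment data is precisely what keeps the argument unconditional.
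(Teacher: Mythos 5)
Your proof is correct and takes essentially the same route as the paper's: Cauchy--Schwarz applied to $\lambda_f(p)^2=|\lambda_f(p)|^{1/2}\cdot|\lambda_f(p)|^{3/2}$, followed by an upper bound for $\sum_{p\leq x}|\lambda_f(p)|^3$ in terms of the second and fourth moments from \lemref{af}. The only cosmetic difference is that the paper controls the cubic absolute moment by a second application of Cauchy--Schwarz, $\bigl(\sum|\lambda_f(p)|^3\bigr)^2\leq\bigl(\sum\lambda_f^2(p)\bigr)\bigl(\sum\lambda_f^4(p)\bigr)$, whereas you use the AM--GM bound $|t|^3\leq\tfrac12(t^2+t^4)$; both are unconditional and yield the stated estimate with slightly different constants.
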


\begin{proof}
From  the Cauchy-Schwarz inequality, we have
\begin{align*}
\bigg(\sum_{x<p\leq 2x}\lambda_f^2(p)\bigg)^2\leq
\bigg(\sum_{x<p\leq 2x}|\lambda_f(p)|\bigg)\bigg(\sum_{x<p\leq 2x}|\lambda_f^3(p)|\bigg),
\end{align*}
i.e.,
\begin{align*}
\frac{\bigg(\displaystyle{\sum_{x<p\leq 2x}}\lambda_f^2(p)\bigg)^2}{\displaystyle{\sum_{x<p\leq 2x}}|\lambda_f^3(p)|}\leq
\sum_{x<p\leq 2x}|\lambda_{f}(p)|.
\end{align*}
From \lemref{af}, we have
\begin{equation*}
\sum_{x<p\leq 2x}\lambda_f^2(p)\sim \frac{x}{\log x}.
\end{equation*}
Therefore, to complete the proof of the proposition it suffices to show that $\displaystyle{\sum_{x<p\leq 2x}|\lambda_{f}^3(p)|}\gg \frac{x}{\log x}$. For that,
first we use the Cauchy-Schwarz inequality to write
\begin{align*}
\bigg(\sum_{x<p\leq 2x}|\lambda_f^3(p)|\bigg)^2\le 
\bigg(\sum_{x<p\leq 2x}\lambda_f^2(p)\bigg)\bigg(\sum_{x<p\leq 2x}\lambda_f^4(p)\bigg)
\end{align*}
and then use \lemref{af} for the sums on the right hand side in the above inequality.
\end{proof}
	
From \thmref{bru1} and \propref{fg1}, we conclude the following corollary about the sign change of the sequence $\{\lambda_f(p)\}$, which is also known by Elliott-Kish in \cite{ek}.
This result will be useful for proving the $\Omega_{\pm}$-result; see \corref{opm}.
\begin{cor} \label{scf}
	There are infinitely many sign changes for the sequence $\{\lambda_f(p)\}$.
\end{cor}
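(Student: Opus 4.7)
The plan is a very short contradiction argument exploiting the mismatch between $\sum_{p\leq x}\lambda_f(p)$ and $\sum_{p\leq x}|\lambda_f(p)|$ furnished by the two quoted results.

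First, I would specialize \thmref{bru1} to $j=1$, which gives the cancellation estimate
\[
\sum_{p\leq x}\lambda_f(p)=o\!\left(\frac{x}{\log x}\right)
\]
as $x\to\infty$. Second, \propref{fg1} supplies the matching absolute lower bound
\[
\sum_{p\leq x}|\lambda_f(p)|\gg_f \frac{x}{\log x}.
\]
These two inputs already say, at the level of averages, that the signs of $\lambda_f(p)$ cannot line up for almost all large primes, since otherwise the two sums would be comparable.

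To convert this into infinitely many sign changes in the sequence $\{\lambda_f(p)\}$ indexed by increasing primes, I would assume the contrary: that only finitely many sign changes occur. Then there is a threshold $N$ such that for every prime $p>N$ the value $\lambda_f(p)$ has a fixed sign $\varepsilon\in\{+1,-1\}$ (primes at which $\lambda_f(p)=0$ are neutral for the notion of sign change and can be discarded from the relevant subsequence). For such $p>N$ one has $|\lambda_f(p)|=\varepsilon\lambda_f(p)$, and absorbing the contributions from the finitely many primes $p\leq N$ into a bounded error yields
\[
\sum_{p\leq x}|\lambda_f(p)|=\varepsilon\sum_{p\leq x}\lambda_f(p)+O(1).
\]
The left-hand side is $\gg_f x/\log x$ while the right-hand side is $o(x/\log x)$, a contradiction. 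Hence the sequence must change sign infinitely often.

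There is essentially no serious obstacle here: the entire argument is a one-step bookkeeping exercise once the quantitative inputs of \thmref{bru1} and \propref{fg1} are in hand. The only minor point of care is the handling of primes at which $\lambda_f(p)$ vanishes, which are neutral and can be ignored when extracting the subsequence on which sign is well-defined.
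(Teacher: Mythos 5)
Your argument is correct and is essentially the paper's own proof: both rest on the clash between $\sum_{p\le x}\lambda_f(p)=o(x/\log x)$ (Theorem~\ref{bru1} with $j=1$) and $\sum_{p\le x}|\lambda_f(p)|\gg_f x/\log x$ (Proposition~\ref{fg1}). The only cosmetic difference is that the paper runs the contradiction on each dyadic interval $(x,2x]$, thereby showing every such interval contains primes of both signs, whereas you negate ``infinitely many sign changes'' globally; the inputs and the mechanism are identical.
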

\begin{proof}
	It suffices to show that for any real number $x$, sufficiently large, the interval $I_x=(x,2x]$ contains at 
	least two distinct primes $p$ and $q$ with $\lambda_f(p)<0$ and $\lambda_f(q)>0$.	
Here we give a proof for the existence of $p$ since the other case can be handled exactly in the same way.
So, we assume the first claim is not true, i.e., $\lambda_f(p)\geq 0$ whenever $p \in(x,2x]$.
Then from \propref{fg1}
$$\sum_{x< p\leq 2x}\lambda_f(p)\gg_f \frac{x}{\log x}$$
but taking $j=1$ in \thmref{bru1}, we know
$$\sum_{x< p\leq 2x}\lambda_f(p)=o\bigg(\frac{x}{\log x}\bigg),$$ which is not compatible with the above estimate. 
Therefore our assumption is wrong and hence it completes the proof.
\end{proof}

\section{Proof of \thmref{l1ar}}
For a prime $p$, we define $$V(p)=(g(p)+a)^2,$$
where $g(p)=-\frac{1}{14}\lambda_f^8(p)+\frac{2}{9}\lambda_f^6(p)+\frac{17}{9}\lambda_f^2(p)-2$, an even polynomial of degree
$8$ in the variable $\lambda_f(p)$ and $a$ is a positive real number, chosen later. 
Using the prime number theorem and the asymptotics for the even power moments of $\lambda_{ f}(p)$ given in \eqref{afs} and \eqref{af1}, we have 
\begin{equation}\label{ga1}
 \sum_{p\leq x}g(p)=o\left(\frac{x}{\log x}\right).
\end{equation}
Let $S:=\{p: |\lambda_f(p)| <1\}$. From our assumption, 
$|\lambda_f(p)| \leq 2$ for all primes $p$ hence
$1\leq |\lambda_f(p)|\leq2$ for  $p \notin S$ and 
 also 
 for any prime $p$ one can check that 
 $g^2(p)\leq 15.093$, so
 \begin{equation}\label{gu}
 \sum_{p\leq x}g^2(p)\leq 15.093~ \frac{x}{\log x}.
 \end{equation}
Combine \eqref{ga1} and \eqref{gu} to obtain
 $$ \sum_{p\leq x}V(p)\leq (a^2+15.093)\frac{x}{\log x}+o\left(\frac{x}{\log x}\right).$$
Positivity of $V(p)$ gives us  
\begin{equation}\label{l11}
 \sum_{p\leq x, p\notin S}V(p) \leq (a^2+15.093)\frac{x}{\log x}+o\left(\frac{x}{\log x}\right).
 \end{equation}
Now it is easy to check that for any $p\notin S$, $g(p)\geq 0.039$.  Therefore
\begin{align}\label{l22}
  \sum_{p\leq x, p\notin S}V(p)&\geq (a+0.039)^2\sum_{p\leq x, p\notin S}1\notag \\
  &=(a+0.039)^2\left(\frac{x}{\log x}-\sum_{p\leq x, p\in S}1\right).
\end{align}
From \eqref{l11} and \eqref{l22}, we have 
\begin{align*}
 {{\sum_{p\leq x, p\in S}}1}\geq \Bigg(1-\frac{a^2+15.093}{(a+0.039)^2}\Bigg)\frac{x}{\log x}+o\left(\frac{x}{\log x}\right).
\end{align*}
As a function of $a$, the rational function $1-\frac{a^2+15.093}{(a+0.039)^2}$ attains its maxima at $a=387$ with maximum value $0.00010077$.  Therefore
$$\displaystyle{\liminf_{x\rightarrow \infty}}\frac{\#\{p\leq x:~ |\lambda_f(p)|< 1 \}}{\frac{x}{\log x}}\geq 0.00010077$$ 
which completes the proof.

\section{Proof of \thmref{l1}}
In view of the relation $\lambda_f(p^2)=\lambda_f(p)^2-1$, the theorem is equivalent to the statement that there are infinitely many primes $p$ with $\lambda_{ f}(p^2)<0$. Therefore we will achieve the theorem by proving that for any $x$, sufficiently large, the interval $(x, 2x]$ contains a prime $p$ with
$\lambda_{f}(p^2)<0$. 

So, let $x$ be a sufficiently large fixed real number. By way of contradiction, assume $\lambda_{f}(p^2)\geq 0$ for all primes $p \in (x, 2x]$. 
We next claim that the positivity assumption will forced us to have 
\begin{equation*}
\frac{x}{\log x}\ll \sum_{x<p\leq 2x}\lambda_{f}(p^2).
\end{equation*}
Once we prove this claim then we are done because this will contradict  \thmref{bru1}, namely 
\begin{equation*}
\sum_{x<p\leq 2x}\lambda_f(p^2)=o\bigg(\frac{x}{\log x}\bigg).
\end{equation*}
Now to prove the claim we proceed as follows.
From  the Cauchy-Schwarz inequality, we have
\begin{align*}
\bigg(\sum_{x<p\leq 2x}\lambda_f^2(p^2)\bigg)^2\leq
\bigg(\sum_{x<p\leq 2x}\lambda_f(p^2)\bigg)\bigg(\sum_{x<p\leq 2x}\lambda_f^3(p^2)\bigg),
\end{align*}
i.e.,
\begin{align}\label{es}
\frac{\bigg(\displaystyle{\sum_{x<p\leq 2x}}\lambda_f^2(p^2)\bigg)^2}{\displaystyle{\sum_{x<p\leq 2x}}\lambda_f^3(p^2)}\leq
\sum_{x<p\leq 2x}\lambda_{f}(p^2).
\end{align}

In order to obtain a lower bound of $\displaystyle{\sum_{x<p\leq 2x}\lambda_{f}(p^2)}$, we need to get an upper bound for
$\displaystyle{\sum_{x<p\leq 2x}\lambda_f^3(p^2)}$ and a lower bound for $\displaystyle{\sum_{x<p\leq 2x}\lambda_f^2(p^2)}$. For that
\begin{align*}
\sum_{x<p\leq 2x}\lambda_f^2(p^2)=\sum_{x<p\leq 2x}\big(\lambda_f^2(p)-1\big)^2=\sum_{x<p\leq 2x}\lambda_f^4(p)+\sum_{x<p\leq 2x}1-2\sum_{x<p\leq 2x}\lambda_f^2(p)
\end{align*}
From \lemref{af} and the prime number theorem, we have
\begin{equation}\label{esa}
\sum_{x<p\leq 2x}\lambda_f^2(p^2)\sim \frac{x}{\log x}.
\end{equation}
Similarly, we obtain
\begin{equation}\label{esa1}
\sum_{x<p\leq 2x}\lambda_f^3(p^2)\sim \frac{x}{\log x}.
\end{equation}
Thus combining \eqref{es}, \eqref{esa} and \eqref{esa1} we have
\begin{equation*}\label{esl}
\frac{x}{\log x}\ll \sum_{x<p\leq 2x}\lambda_{f}(p^2)
\end{equation*}
which proved the claim.

\section{proof of \thmref{1b}}
As in the proof of \thmref{l1}, we address this problem for the smallest prime $p$ such that $\lambda_{sym^2 f}(p)=\lambda_f(p^2)<0$. For that consider the function 
\begin{equation*}
\psi(sym^2 f,x):= \sum_{n\leq x}\Lambda _{sym^2 f}(n),
\end{equation*}
where $\Lambda _{sym^2 f}(n)$ is the $n$th coefficient of the negative of the logarithmic derivative of the $L$-function $L(s, {\rm sym}^2f)$ which are supported only on prime powers.
Now since the Rankin-Selberg $L$-function $L(s, {\rm sym}^2 f\times{\rm sym}^2 f)$ exists and has a simple pole at $s=1$ (see, \cite[p. 180]{hl}), then 
from \cite[p. 110, exercise 6]{ik} we have
\begin{equation}\label{sub}
\psi(sym^2 f,x)=\sum_{p\leq x}\lambda_{sym^2 f}(p)\log p+O\big(\sqrt{x}d^2\log^2 (xq(sym^2 f))\big),
\end{equation}
where the implied constant is absolute and $d$ is the degree of the $L$-function  $L(s, {\rm sym}^2f)$ (notice that $d=3$).
The $L$-function  $L(s, {\rm sym}^2f)$ is entire, non-vanishing at $s = 1$, and does not have a Siegel zero; see \cite{hl}. Using this, the prime number theorem for $L(s, {\rm sym}^2f)$ becomes (see, \cite[equ. 5.52]{ik}), 
\begin{equation}\label{pns}
\psi(sym^2 f,x)=O\left(x\sqrt{q(sym^2 f)} e^{-\frac{c}{162}\sqrt{\log x}}\right),
\end{equation} 
where $c$ is an absolute constant appearing in the proof of Theorem $5.10$ of \cite{ik}.
After substituting these approximations in \eqref{sub}

\begin{equation*}
\sum_{p\leq x}\lambda_{sym^2 f}(p)\log p=O\big(\sqrt{x}d^2\log^2 (xq(sym^2 f))\big)+
O\left(x\sqrt{q(sym^2 f)} e^{-\frac{c}{162}\sqrt{\log x}}\right),
\end{equation*}
i.e., $$\sum_{p\leq x}\lambda_{sym^2 f}(p)\log p \ll x\sqrt{q(sym^2 f)} e^{-\frac{c}{162}\sqrt{\log x}}.$$
We now apply the partial summation formula to the left hand side of the above inequality to obtain
\begin{equation}\label{sub1}
\sum_{p\leq x}\lambda_{sym^2 f}(p)\ll \frac{x}{\log x}\sqrt{q(sym^2 f)} e^{-\frac{c}{162}\sqrt{\log x}}.
\end{equation} 

Let $p_0$ be the smallest prime such that $\lambda_f(p_0^2 ) < 0$. Now given
any $A>0$ if $p_0 \leq A$ , then we are done since $A$ can be absorbed in
the implied constant figuring in the bound occurring in the theorem. Hence
we can assume that $p_0$ is as large as we please. 
Choose $y =p_0 - \epsilon$ where $0< \epsilon < 1/2$. Then
our assumption on $p_0$ implies that $\lambda_f(p^2) \geq 0$ for all $p \leq y$ 
and we can choose $y$ as large as we please.
Then using the same argument as for \eqref{esl}, we obtain
\begin{equation}\label{slb}
\frac{y}{\log y}\ll \sum_{p\leq y}\lambda_{sym^2 f}(p).
\end{equation}
Note that inequality \eqref{sub1} is true for any sufficiently large real number $x$, in particular for $y$ also. Now
comparing the upper \eqref{sub1} and lower \eqref{slb} bounds for the sum $\sum_{p\leq y}\lambda_{sym^2 f}(p)$ and then substituting $q(sym^2 f)=(1+|u|)^2$ gives the result.
\section{Proof of \thmref{sra}}
For a given $a>1$, we consider the polynomial $V_a(p)=\lambda_f^2(p)-a$ and let $S_a=\{p: |\lambda_f(p)|<\sqrt{a}\}$. Using the fact that $\lambda^2_f(p)\geq a$ for $p\notin S_a$, we have 
	\begin{align*}
	0\leq \sum_{p\leq x, p\notin S_a}V_a(p)&=\sum_{p\leq x}V_a(p)-\sum_{p\leq x, p\in S_a}V_a(p)\\
	&=\frac{x}{\log x}-a\frac{x}{\log x}+o\Big(\frac{x}{\log x}\Big)-\sum_{p\leq x, p\in S_a}\lambda^2_f(p)+a\sum_{p\leq x, p \in S_a}1.
	\end{align*}
	In other words, we have
	\begin{align*}
	\sum_{p\leq x, p\in S_a}\lambda^2_f(p)+(a-1)\frac{x}{\log x}\leq a\sum_{p\leq x, p \in S_a}1+o\Big(\frac{x}{\log x}\Big).
	\end{align*}
	Since $\lambda_f^2(p)\geq 0$ for any prime $p$ and $a>1$, hence we can ignore the sum $\sum_{p\leq x, p\in S_a}\lambda^2_f(p)$ from the left hand side of the above inequality and then divide by $\frac{x}{\log x}$ on both the sides to get 
	$$\frac{a-1}{a}\leq \liminf_{x\rightarrow \infty}\displaystyle{\frac{\sum_{p\leq x, p \in S_a}1}{\pi(x)}},$$
	which completes the proof.

\section{Proof of \thmref{1to2}}
 \begin{proof}[a.]
 Consider the polynomial
 $$g(t)=-\frac{1}{5.7}t^6+\frac{4}{8.5}t^4+\frac{17}{18}t^2-1.$$
  Notice that $g(t)$
   is $<0$ for $t\in [0, t_0)\cup (t_1, \infty)$ 
   and  $>0$ for $t \in (t_0,t_1)$ where $t_0=0.908$ and $t_1=1.928$ and also $g(t_0)=0=g(t_1)$. 
   In the polynomial $g(t)$, we put $t=\lambda_f(p)$ and consider the sum $ \sum_{p \leq x}g(\lambda_f(p))$. Using \lemref{af} and \lemref{as4} we obtain the following asymptotic,
   \begin{equation}\label{ga}
   \sum_{p \leq x}g(\lambda_f(p))\sim 0.0085 ~\pi(x)~~ {\rm as} ~~x\rightarrow \infty.
  \end{equation}
 Next we split the sum $\sum_{p \leq x}g(\lambda_f(p))$ into two parts, depending on $g$ is positive or negative
\begin{align*}
 \sum_{p \leq x}g(\lambda_f(p))=&\sum_{ p \in S_0(x)\cup S_1(x)}g(\lambda_f(p))+\sum_{ p\in S(x)}g(\lambda_f(p))
\end{align*}
where 
$S_0(x)=\{p\leq x: 0\leq |\lambda_f(p)|\leq t_0\}$, $S(x)=\{p\leq x:  t_0\leq |\lambda_f(p)|\leq t_1\}$ and
$S_1(x)=\{p\leq x: |\lambda_f(p)|> t_1\}.$
Since for $p \in S_0(x)\cup S_1(x),~ g(\lambda_f(p))<0$, so in order to get an upper bound for the sum $\sum_{p \leq x}g(\lambda_f(p))$ 
one can completely ignore the first term from the above equation. Thus
 \begin{align*}
 \sum_{p \leq x}g(\lambda_f(p))&< \sum_{ p\in S(x)}g(\lambda_f(p))
  \leq 1.561  \sum_{p \in S(x)}1.
\end{align*}
Here $1.561 $ is the maximum of $g(\lambda_f(p))$ for $p \in S(x).$
We now use  asymptotic \eqref{ga} for 
$\sum_{p \leq x}g(\lambda_f(p))$ in the last inequality  
\begin{equation*}
 \frac{0.0085}{1.561}\leq \liminf_{x\rightarrow \infty}\frac{\#\{p \leq x: 0.908\leq |\lambda_f(p)|\leq 1.928\}}{\pi(x)},
\end{equation*}
which completes the proof.
\end{proof}

\begin{proof}[b.]
 Consider the polynomial
 \begin{align*}
  g_{\alpha}(t)=&t^2(-t^2+1)(t^2-4).
 \end{align*}
  Note that $g_{\alpha}(t)$ is an even polynomial of degree 6 whose positive roots are 0,1 and 2. Also,  
 $ g_{\alpha}(t)< 0$  for $t\in [0,1)\cup (2,\infty)$ and
 $g_{\alpha}(t)>0$ for $t\in (1,2)$.

Now consider $g_{\alpha}$ as a polynomial in $\lambda_f(p)$.  Then \lemref{af} yields
\begin{equation}\label{aa}
\sum_{p \leq x}g_{\alpha}(\lambda_f(p))\sim \pi(x)~~ {\rm as} ~~x\rightarrow \infty.
\end{equation}
On the other hand, if we split the sum $\sum_{p \leq x}g_{\alpha}(\lambda_f(p))$ into two parts, depending on the sign of $g_{\alpha}$, then
$$\sum_{p \leq x}g_{\alpha}(\lambda_f(p))< {M} ~\#\{p\leq x: ~1<|\lambda_f(p)|<2\},$$
where $M:={\rm {Max}}\{g_{\alpha}(\lambda_f(p)): |\lambda_f(p)|\in (1,2)\}=6.065.$ This together with 
asymptotic \eqref{aa} gives
 \begin{equation}\label{bp}
  \frac{1}{6.065}\leq \liminf_{x\rightarrow \infty}\frac{\#\{p \leq x:~ 1< |\lambda_f(p)|< 2\}}{\pi(x)},
 \end{equation}
 and hence we complete the proof.
\end{proof}

\section{Proof of \thmref{sr2r}}
 \begin{proof}[a.]
Let $A=\{p:|\lambda_f(p)|>\sqrt{2}\}$. Then we
use \lemref{af} to say
\begin{align*}
0\leq -\sum_{p\leq x, p\notin A}\lambda^6_f(p)+2\sum_{p\leq x, p\notin A}\lambda^4_f(p)
&=
-5\frac{x}{\log x}+\sum_{p\leq x, p\in A}\lambda^6_f(p)\\
&+4\frac{x}{\log x}-2\sum_{p\leq x, p\in A}\lambda^4_f(p)+o\left(\frac{x}{\log x}\right).
\end{align*}
In other words,
\begin{equation*}
\frac{x}{\log x}\leq \sum_{p\leq x, p\in A}\lambda^4_f(p)\big(\lambda_f^2(p)-2\big)+o\left(\frac{x}{\log x}\right).
\end{equation*}
By our assumption, for any prime $p$, $|\lambda_f(p)| \leq 2$ therefore we have
\begin{equation*}
\frac{x}{\log x}\leq 32\sum_{p\leq x, p\in A} 1+o\left(\frac{x}{\log x}\right).
\end{equation*}
Now first divide by $\pi(x)$ on both the sides of the above inequality and then taking $\displaystyle{\liminf_{x \rightarrow\infty}}$, we obtain
\begin{equation*}
\frac{1}{32}\leq \liminf_{x \rightarrow\infty} \frac{\sum_{p\leq x, p\in A} 1}{\pi(x)},
\end{equation*}
which completes the proof. 
\end{proof}

 \begin{proof}[b.]
 For this we consider the following even polynomial
 \begin{equation*}
  V(p)=-\lambda_f(p)^6+2\lambda_f(p)^4+1.
 \end{equation*}
 As in the proof of \thmref{l1}, we prove the theorem by showing that for every $x$, sufficiently large, the interval $(x,2x]$ 
 contains at least one prime $p$ such that
 $|\lambda_f(p)|>\sqrt{2}$.
By way of contradiction, assume 
$|\lambda_f(p)|\leq \sqrt{2}$ whenever $p\in (x,2x]$. This shows that for $p \in (x,2x]$,
$V(p)\geq 1$ and hence we have
\begin{equation}\label{lbt}
 \sum_{x<p\leq 2x}V(p)\geq \frac{x}{\log x}.
\end{equation}
But from \lemref{af}, we know 
\begin{equation*}
 \sum_{p\leq x}V(p)=o\left(\frac{x}{\log x}\right)~~{\rm as}~~x\rightarrow \infty
\end{equation*}
which contradicts \eqref{lbt}.
Therefore our assumption, that $|\lambda_f(p)|\leq \sqrt{2}$ whenever $p\in (x,2x]$ is false and hence this completes the proof.
\end{proof}

\section{Proof of \thmref{oor}}
We achieve this result by showing that for each $x$ sufficiently large, the interval $(x, 2x]$ gives a square-free integer $N$ such that 
\begin{equation*}
|a(N)|\geq A~{\rm {exp}}\bigg(\frac{c\log N}{\log \log N}\bigg), ~~{\rm ~for ~some~constant}~ A>0.
\end{equation*}
For a sufficiently large $x$, consider the interval $I_x:=(x,2x]$. Suppose for all primes $p\in I_x$
\begin{equation*}
a(p)=O\bigg({\rm {exp}}\bigg(\frac{c_0\log p}{\log \log p}\bigg)\bigg), ~~{\rm{for~ some  ~constant~}} 
c_0>0,
\end{equation*} 
otherwise our claim is true.
Now, fix $\delta$ such that $1<\delta <2^{\frac{1}{4}}$, i.e., $1<\delta^4<2$.
Therefore
\begin{equation}\label{ups4}
\sum_{p\in I_x, |a(p)|<\delta}a(p)^4\leq \delta^4 \frac{x}{\log x}.
\end{equation}
Combining this with assumption (ii) in the statement gives
\begin{align}\label{lbs4}
\sum_{p\in I_x, |a(p)|\geq \delta}a(p)^4&=\sum_{p\in I_x}a(p)^4-\sum_{p\in I_x, |a(p)|< \delta}a(p)^4\notag \\
&\geq (\alpha-\delta^4)\frac{x}{\log x}\notag\\
&\gg \frac{x}{\log x}.
\end{align}
Since $\alpha$ is greater than or equals to 2 therefore 
the constant that appears in the right hand side of the above inequality is non-zero.
Let $T:=\#\{p\in I_x: |a(p)|\geq \delta\}$. Then
from \eqref{lbs4}, we have
\begin{equation}\label{tb}
T\geq \frac{c_1 x}{\log x}{\rm{exp}}\bigg(\frac{-4c_0\log x}{\log \log x}\bigg),
\end{equation}
where $c_1$ is a positive constant.
Now, put $N=\prod_{p\in I_x, |a(p)|\geq \delta}p$.
Because the coefficients $a(n)$ are multiplicative, we have
\begin{equation*}
|a(N)|=\prod_{p|N}|a(p)|\geq \delta^{T}={\rm exp}\big(T\log \delta\big).
\end{equation*}
Now
\begin{equation*}
T \log x\leq \log N=\sum _{x<p\leq 2x, |a(p)|\geq \delta}\log p\leq T\log 2x,
\end{equation*}
i.e., $\log N\asymp T \log x$. Thus
\begin{equation*}
|a(N)|\geq {\rm exp}\bigg((\log \delta)\frac{\log N}{\log x}\bigg).
\end{equation*}
We now want to analyse $\log x$ as a function of $N$.
 From the prime number theorem, we have $\log N\ll x$ whereas from \eqref{tb} 
\begin{equation*}
\log N\geq T\log x\geq c_o x ~{\rm{exp}}\bigg(\frac{-4c_o\log x}{\log \log x}\bigg).
\end{equation*} 
Putting all together, we obtain
\begin{equation*}
\log x\gg \log {\log N}\geq \log c_o+\log x+O\bigg(\frac{\log x}{\log \log x}\bigg).
\end{equation*}
In other words, $\log x\asymp \log \log N$, so that 
\begin{equation*}
|a(N)|\gg {\rm exp}\bigg((\log \delta)\frac{\log N}{\log \log N}\bigg).
\end{equation*}
This completes the theorem.

\section{Proof of \corref{ofs} and  \corref{opm}}
Since both the sequences $\{\lambda_{ f}(n)\}$ and $\{\lambda_{sym^2 f}(n)\}$ are multiplicative and from
	\lemref{af} and \lemref{as4} it is also clear that they satisfy the second assumption of \thmref{oor} with $\alpha=2$ and $3$, respectively. Hence from \thmref{oor} we complete the proof of \corref{ofs}.

The proof of \corref{opm} goes as follows:
From \corref{ofs}, we know 
\begin{equation}\label{op}
a(m)>c_1{\rm {exp}}\bigg(\frac{c\log m}{\log \log m}\bigg)
\end{equation}
or
\begin{equation}
a(m)<-c_2{\rm {exp}}\bigg(\frac{c\log m}{\log \log m}\bigg)
\end{equation}
 holds infinitely often for square-free integers $m$ with some positive constants $c_1$ and $c_2$. 
Without loss of generality, assume \eqref{op} is true. So it is sufficient to prove that there is a sequence of positive integers $\{n_m\}_{m\in \mathbb N}$ and a positive constant $M$ such that
\begin{equation*}\label{om}
-a(n_m)>M{\rm {exp}}\bigg(\frac{c\log n_m}{\log \log n_m}\bigg)
\end{equation*}
for all $m\in {\mathbb N}$. Let $q$ be a prime with $a(q)<0$, such $q$ exists by 
\corref{scf} or \corref{scs} depending on $f$ or sym$^2 f$, respectively.
Then for any $m$ satisfying inequality \ref{op}, we define
$n_m= qm$ if $(q, m) = 1$ and $n_m=\frac{m}{q}$ otherwise. In the first scenario, since $(q,m)=1$ we have
$$-a(n_m)=(-a(q))a(m)\gg_q{\rm {exp}}\bigg(\frac{c\log n_m}{\log \log n_m}\bigg),$$
otherwise, we know $(n_m,q)=1$ as $m$ is square-free, hence
$$-a(n_m)=\bigg(\frac{-1}{a(q)}\bigg)a(m)\gg_q{\rm {exp}}\bigg(\frac{c\log n_m}{\log \log n_m}\bigg).$$
This completes the proof.

\begin{acknowledgements}
	The authors would like to thank Prof. Satadal Ganguly for many helpful conversations. 
	Also, it is our pleasure to 
	thank Prof. Ram Murty for sketching 
	a slightly
	different proof of  \corref{ofs} in the case of Maass cusp form through private communication.
	The authors thank the anonymous referees for a careful reading of the manuscript and they are grateful for several suggestions from the referees that led to a substantial improvement in the quality of this article.
	 The authors also thank
	the School of Mathematics, TIFR Mumbai, where this project was started when the first named author was a postdoctoral fellow there, for providing excellent working conditions.
	
	The research of the first author was supported by Israeli Science Foundation grant 1400/19.
	\end{acknowledgements}


\end{document}